\documentclass{amsart}
\usepackage{bbm,mathtools,enumitem,amssymb}
\usepackage{amsrefs}
\allowdisplaybreaks

\newcommand{\R}{\ensuremath{\mathbb{R}}}
\newcommand{\Exp}{\ensuremath{\mathbb{E}}}
\newcommand{\F}{\ensuremath{\mathcal{F}}}

\newcommand{\Sc}{\ensuremath{\mathcal{S}}}

\newcommand{\Ltwo}{\ensuremath{\mathcal{L}^2}}

\newcommand{\inpr}[3][]{\left\langle#2 \,,\, #3\right\rangle_{#1}}
\newcommand{\predbracket}[1]{\left\langle#1\right\rangle}
\newcommand{\bracket}[1]{\left[#1\right]}
\newcommand{\indicator}[1]{\mathbbm{1}_{#1}}
\newcommand{\Partition}{\ensuremath{\mathbb{P}}}

\DeclarePairedDelimiter\floor{\lfloor}{\rfloor}
\theoremstyle{plain}
\newtheorem{thm}{Theorem}[section]

\newtheorem{lem}[thm]{Lemma}
\newtheorem{propn}[thm]{Proposition}
\theoremstyle{definition}
\newtheorem{rem}[thm]{Remark}

\newtheorem{note}[thm]{Note}

\numberwithin{equation}{section}

\begin{document}
\title[An It\={o} formula in $\mathcal S'$]{An It\={o} formula in the space of 
tempered distributions}
\author{Suprio Bhar}
\address{Suprio Bhar, Indian Statistical Institute Bangalore Centre.}
\date{}
\keywords{Hermite-Sobolev spaces, Tempered distributions, $\mathcal{S}'$ valued 
processes, It\={o} formula, Local times, Stochastic 
Integral, L\'evy processes}
\subjclass[2010]{Primary: 60H05; Secondary: 60H10, 60H15}
\email{suprio@isibang.ac.in}
\begin{abstract}
We extend the It\={o} formula \cite{MR1837298}*{Theorem 2.3} for 
semimartingales 
with rcll paths. We 
also comment on Local time process of such semimartingales. We apply the 
It\={o} formula to L\'{e}vy processes to obtain existence of solutions to 
certain classes of stochastic differential equations in the 
Hermite-Sobolev spaces.
\end{abstract}
\maketitle
\section{Introduction}
It\={o} formula is an important result in stochastic calculus and has been 
studied in quite generality, starting from real valued processes to processes 
taking values in Nuclear spaces (\cites{MR1467435, MR3063763, 
MR688144, MR0264754, MR578177, MR664333, MR1837298, 
MR1465436, MR771478, MR1207136}).

Let $\Sc(\R^d)$ denote the space of real valued rapidly decreasing smooth 
functions on $\R^d$ and let 
$\Sc'(\R^d)$ denote the dual space, i.e. the space of tempered distributions. 
For $p \in \R$, let $\Sc_p(\R^d)$ denote the Hermite-Sobolev spaces and for 
$x\in \R^d$, let $\tau_x$ denote the translation operators (see definitions 
in Section 2). Given $\phi \in \Sc_{-p}(\R^d)$ and an $\R^d$ valued continuous 
semimartingale $X_t=(X_t^1,\cdots,X_t^d)$, we have the following It\={o} 
formula (see \cite{MR1837298}*{Theorem 2.3})
\begin{thm}
$\{\tau_{X_t}\phi\}$ is an $\Sc_{-p}(\R^d)$ valued continuous semimartingale 
and we have the equality in $\Sc_{-p-1}(\R^d)$, a.s.
\[\tau_{X_t}\phi = \tau_{X_0}\phi - \sum_{i=1}^d \int_0^t
\partial_i\tau_{X_{s}}\phi\,
dX^i_s + \frac{1}{2}\sum_{i,j=1}^d \int_0^t \partial_{ij}^2\tau_{X_{s}}\phi\,
d[X^i,X^j]_s,\, t \geq 0.\]
\end{thm}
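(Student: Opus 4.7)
The approach is to reduce to the classical real-valued It\={o} formula via duality with Schwartz test functions. For any $\psi \in \Sc(\R^d)$, set $F(x) := \langle \tau_x \phi, \psi \rangle = \langle \phi, \tau_{-x} \psi \rangle$. A chain-rule computation gives $\partial_{x_i} F(x) = \langle \phi, \tau_{-x}\partial_i \psi \rangle$, which after integration by parts on $\phi$ equals $-\langle \partial_i \tau_x \phi, \psi \rangle$; iterating, $\partial^2_{x_i x_j} F(x) = \langle \partial^2_{ij} \tau_x \phi, \psi \rangle$. Thus $F \in C^\infty(\R^d)$, and the classical It\={o} formula applied to $F(X_t)$ yields exactly the scalar pairing of the claimed identity with $\psi$ (the minus sign on the first-order term is produced by the chain rule for $\tau_{-x}$).

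The structural steps are as follows. First, use that $x \mapsto \tau_x \phi$ is continuous from $\R^d$ into $\Sc_{-p}(\R^d)$, a standard property of the translation group on Hermite-Sobolev spaces, to conclude that $\{\tau_{X_t}\phi\}$ is an $\Sc_{-p}$-valued continuous adapted process. Second, using that $\partial_i$ and $\partial^2_{ij}$ map $\Sc_{-p}$ continuously into $\Sc_{-p-1}$, deduce that $s \mapsto \partial_i \tau_{X_s}\phi$ and $s \mapsto \partial^2_{ij} \tau_{X_s}\phi$ are $\Sc_{-p-1}$-valued continuous adapted processes. Since $\Sc_{-p-1}$ is a separable Hilbert space, the vector-valued stochastic integrals $\int_0^t \partial_i \tau_{X_s}\phi\, dX^i_s$ and the pathwise Lebesgue-Stieltjes integrals $\int_0^t \partial^2_{ij} \tau_{X_s}\phi\, d[X^i,X^j]_s$ are well-defined $\Sc_{-p-1}$-valued continuous processes (after localization if needed).

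To obtain the identity in $\Sc_{-p-1}$, I would pair both sides with an arbitrary $\psi \in \Sc(\R^d)$. Commutativity of continuous linear functionals with Hilbert-space-valued stochastic integrals reduces the right-hand side to the classical It\={o} expansion of $F(X_t)$ via the derivative identifications above, which equals the left-hand side. Since $\Sc(\R^d)$ is dense in $\Sc_{p+1}(\R^d)$, the topological dual of $\Sc_{-p-1}$, the identity holds almost surely in $\Sc_{-p-1}$. The semimartingale property of $\{\tau_{X_t}\phi\}$ in $\Sc_{-p}$ itself follows from the fact that the left-hand side is $\Sc_{-p}$-valued together with the scalar-semimartingale property of every projection $\langle \tau_{X_t}\phi, \psi\rangle = F(X_t)$ for $\psi \in \Sc_p(\R^d)$, together with the standard lifting criterion. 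The main obstacle I expect is making rigorous the commutation of the $\Sc_{-p-1}$-valued stochastic integral with the evaluation functional $\langle \cdot, \psi\rangle$ and the associated measurability and $\Ltwo$-integrability conditions; this is handled by approximating the $\Sc_{-p-1}$-valued integrand by simple predictable processes and invoking the Hilbert-space It\={o} isometry, but the identifications with the distributional derivatives $\partial_i \tau_x \phi$ must be tracked carefully at each step.
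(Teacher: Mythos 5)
Your proposal follows essentially the same route as the paper: apply the classical real-valued It\={o} formula to $x \mapsto \inpr{\tau_x\phi}{\psi}$ (with the same sign bookkeeping $\partial_i\inpr{\tau_x\phi}{\psi} = -\inpr{\partial_i\tau_x\phi}{\psi}$), commute the pairing with the Hilbert-space-valued stochastic integrals, and recover the identity in $\Sc_{-p-1}(\R^d)$ by testing against a dense family. The paper does exactly this, with the two points you leave implicit handled explicitly there: the test functions are restricted to the countable Hermite basis $\{h_n\}$ so that the $\psi$-dependent null sets can be unioned into a single one, and the integrands are made norm-bounded by a stopping-time localization before the integrals are formed.
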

This result has been used in \cite{MR3063763} to show
existence of solution of some stochastic differential equations in 
$\Sc'(\R^d)$. The aim of the current paper is to prove the result 
for semimartingales $\{X_t\}$
with rcll (right continuous with left limits) paths.

A version of this It\={o} formula was also 
proved in \cite{MR664333}*{Theorem III.1} with equality in $\Sc'$. In 
\cite{MR0264754}*{Theorem 3}, the 
author has proved this formula for twice continuously (Fr\'{e}chet) 
differentiable function while dealing with a single 
Hilbert space. Note that derivatives of tempered distributions may not be 
in the same Hermite Sobolev space as the original one. Using 
regularization, the result \cite{MR0264754}*{Theorem 3} 
was also proved in \cite{MR613311}*{Theorem 8} in the case of an $E'$ valued 
continuous martingale, where $E$ is a countably Hilbertian Nuclear space.

In Section 2, we recall the countably Hilbertian topology defined on
$\Sc(\R^d)$ which gives rise to the Hermite-Sobolev spaces $\Sc_p(\R^d)$.

In Section 3, we provide the construction of the stochastic integral of an 
$\Sc_{-p}(\R^d)$ valued norm-bounded predictable process $\{G_t\}$ with 
respect to a real valued semimartingale $\{X_t\}$ from the first principles. 
Since the semimartingale is real valued, this procedure is simpler than the 
Hilbert valued stochastic integration described in 
\cite{MR688144}*{Chapter 4 
and 
5}. We note that for any 
$\phi 
\in \Sc(\R^d)$, a.s.
\[\inpr{\int_0^t G_s\,dX_s}{\phi}=\int_0^t \inpr{G_s}{\phi}\, dX_s,\, t \geq 
0.\]
We exploit this property to prove an It\={o} formula (see 
Theorem \ref{Ito-formula}). As an application, we consider an one-dimensional 
L\'{e}vy process $X$ and show the existence of a solution of a stochastic 
differential equation (Theorem \ref{Levy}) in the Hermite-Sobolev spaces. This 
is 
similar to the solution obtained 
in \cite{MR3063763} for continuous processes $X$.

\section{Topologies on $\Sc$ and $\Sc'$}
Let $\Sc(\R^d)$ be the space of smooth rapidly 
decreasing
$\R$-valued functions on $\R^d$ with the topology given by L. Schwartz (see
\cite{MR2296978}) and let $\Sc'(\R^d)$ be the dual space, known as the 
space of tempered distributions. Let $\Sc_p(\R^d)$ be the completion of 
$(\Sc(\R^d), \|\cdot\|_p)$ for any $p \in \R$ (see
\cite{MR771478}*{Chapter 1.3} for the notations). The spaces $\Sc_p(\R^d), p 
\in 
\R$ are separable Hilbert spaces and are known as the Hermite-Sobolev spaces. 
We write $\Sc, \Sc', \Sc_p$ instead of $\Sc(\R), \Sc'(\R), \Sc_p(\R)$.\\
Note that $\Sc_0(\R^d) = \Ltwo(\R^d)$ and for 
$p>0$, $(\Sc_{-p}(\R^d), \|\cdot\|_{-p})$ is dual to $(\Sc_p(\R^d),
\|\cdot\|_p)$. Furthermore,
\[\Sc(\R^d) = \bigcap_{p \in \R}(\Sc_p(\R^d), \|\cdot\|_p), \quad 
\Sc'(\R^d) = \bigcup_{p \in \R}(\Sc_p(\R^d), \|\cdot\|_p)
\]
Given $\psi \in \Sc(\R^d)$ (or $\Sc_p(\R^d)$) and $\phi \in \Sc'(\R^d)$
(or
$\Sc_{-p}(\R^d)$), the action of $\phi$ on $\psi$ will be denoted by
$\inpr{\phi}{\psi}$.\\
Let $\{h_n: n \in {\mathbb Z}_+^d\}$ be the Hermite functions, where ${\mathbb 
Z}_+^d := \{n= (n_1,\cdots, n_d) : n_i\, 
\text{non-negative integers}\}$.
If $n = (n_1, \cdots ,n_d)$, we define $|n|
:= n_1+\cdots +n_d$. Note that $\{h_n^p: n \in {\mathbb Z}_+^d\}$ 
forms an orthonormal basis for $\Sc_p(\R^d)$, where $h_n^p:= (2|n|+d)^{-p} 
h_n$.\\
Consider the derivative maps denoted by $\partial_i:\Sc(\R^d)\to
\Sc(\R^d)$ for $i=1,\cdots,d$. We can extend these maps by duality to
$\partial_i:\Sc'(\R^d) \to \Sc'(\R^d)$ as follows: for $\psi \in
\Sc'(\R^d)$,
\[\inpr{\partial_i \psi}{\phi}:=-\inpr{\psi}{\partial_i \phi}, \; \forall \phi
\in \Sc(\R^d).\]
Let $\{e_i: i=1,\cdots,d\}$ be the standard basis vectors in $\R^d$. Then 
for any $n =
(n_1,\cdots,n_d) \in {\mathbb Z}_+^d$ we have (see
\cite{MR562914}*{Appendix A.5})
\[\partial_i h_n =
\sqrt{\frac{n_i}{2}}h_{n-e_i}-\sqrt{\frac{n_i+1}{2}}h_{n+e_i},\]
with the convention that for a multi-index $n = (n_1,\cdots,n_d)$, if $n_i
< 0$ for some $i$, then $h_n \equiv 0$. Above recurrence implies that
$\partial_i:\Sc_{p}(\R^d)\to\Sc_{p-\frac{1}{2}}(\R^d)$ is a bounded linear
operator.\\
For $x \in \R^d$, let $\tau_x$ denote the translation operators on $\Sc(\R^d)$ 
defined by
$(\tau_x\phi)(y):=\phi(y-x), \, \forall y \in \R^d$. This operators can be
extended to $\tau_x:\Sc'(\R^d)\to \Sc'(\R^d)$ by
\[\inpr{\tau_x\phi}{\psi}:=\inpr{\phi}{\tau_{-x}\psi},\, \forall \psi \in
\Sc(\R^d).\]
\begin{propn}\label{tau-x-estmte}
The translation operators $\tau_x, x \in \R^d$ have the following properties:
\begin{enumerate}[label=(\alph*)]
\item For $x \in \R^d$ and any $p \in \R$, $\tau_x: \Sc_p(\R^d)\to\Sc_p(\R^d)$
is a bounded linear map. In particular, there exists a real polynomial $P_k$ of
degree $k = 2(\floor{|p|}+1)$ such that
\[\|\tau_x\phi\|_p\leq P_k(|x|)\|\phi\|_p, \, \forall \phi \in \Sc_p(\R^d).\]
\item For any $x \in \R^d$ and any $i=1,\cdots,d$ we have
\[\tau_x\partial_i = \partial_i\tau_x.\]
\end{enumerate}
\end{propn}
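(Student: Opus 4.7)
For part (b), I would first verify the identity on $\Sc(\R^d)$ by the chain rule, since both $(\tau_x \partial_i \phi)(y)$ and $(\partial_i \tau_x \phi)(y)$ equal $(\partial_i \phi)(y-x)$ pointwise, and then extend to $\Sc'(\R^d)$ by duality: for $\phi \in \Sc'(\R^d)$ and $\psi \in \Sc(\R^d)$,
\[
\inpr{\partial_i \tau_x \phi}{\psi} = -\inpr{\phi}{\tau_{-x}\partial_i \psi} = -\inpr{\phi}{\partial_i \tau_{-x}\psi} = \inpr{\tau_x \partial_i \phi}{\psi}.
\]

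Part (a) I would attack by a three-stage reduction: non-negative integer $p$, then duality for negative integer $p$, then interpolation for real $p$. \emph{Stage 1} (non-negative integer $p$): Since $H h_n = (2|n|+d) h_n$ for the Hermite operator $H = -\Delta + |y|^2$, the map $H: \Sc_q(\R^d) \to \Sc_{q-1}(\R^d)$ is an isometric isomorphism, and thus $\|\phi\|_p = \|H^p \phi\|_{\Ltwo(\R^d)}$ for $\phi \in \Sc(\R^d)$. The change of variables $z = y - x$ inside the integral defining $\|H^p \tau_x \phi\|_{\Ltwo(\R^d)}^2$ converts $H^p \tau_x \phi$ into $(H_z + 2z \cdot x + |x|^2)^p \phi$ in the $z$-variable. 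Expanding this $p$-th power non-commutatively and pulling out scalar factors yields a finite sum $\sum_\alpha c_\alpha(x) B_\alpha \phi$, where $c_\alpha$ is a monomial in $x$ of degree at most $2p$ and $B_\alpha$ is an ordered product of $H$'s and multiplication-by-$y_j$ operators. The Hermite three-term recurrence gives exactly the same $\Sc_q \to \Sc_{q-1/2}$ bound for multiplication by $y_j$ as the one stated just before the proposition for $\partial_j$, so each $B_\alpha$ maps $\Sc_p(\R^d)$ boundedly into $\Ltwo(\R^d)$. Summing gives $\|\tau_x \phi\|_p \leq P_{2p}(|x|)\|\phi\|_p$ on $\Sc(\R^d)$, and hence on all of $\Sc_p(\R^d)$ by density.

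\emph{Stage 2} (negative integer $p$): Apply stage~1 with $-p > 0$ to get $\tau_{-x}: \Sc_{-p}(\R^d) \to \Sc_{-p}(\R^d)$ bounded; the Banach-space adjoint with respect to the canonical duality satisfies $(\tau_{-x})^* = \tau_x$ (a one-line check from $\inpr{\tau_x \phi}{\psi} = \inpr{\phi}{\tau_{-x}\psi}$), which transfers the bound to $\tau_x$ on $\Sc_p(\R^d)$. \emph{Stage 3} (real $p$): Interpolate between the adjacent integer levels $n = \lfloor p \rfloor$ and $n+1$ (and symmetrically for $p<0$). Because the Hermite-Sobolev scale coincides with the complex interpolation scale of fractional powers of $H$,
\[
\|\tau_x\|_{\Sc_p \to \Sc_p} \leq \|\tau_x\|_{\Sc_n \to \Sc_n}^{1-\theta}\,\|\tau_x\|_{\Sc_{n+1} \to \Sc_{n+1}}^{\theta},\qquad \theta = p - n,
\]
which is dominated by a polynomial in $|x|$ of degree at most $2(\lfloor |p| \rfloor + 1) = k$.

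The main obstacle lies in stage~1: organizing the non-commutative expansion of $(H + 2z\cdot x + |x|^2)^p$ and checking that every resulting operator product $B_\alpha$ maps $\Sc_p(\R^d) \to \Ltwo(\R^d)$ with the right polynomial growth in $|x|$. The key ingredient is the boundedness of multiplication by $y_j$ on the Hermite-Sobolev scale, which comes from the same recurrence used for $\partial_j$; once that is in hand, stages~2 and~3 are essentially mechanical.
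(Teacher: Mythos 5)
Your part (b) follows exactly the paper's route: verify $\tau_x\partial_i\psi=\partial_i\tau_x\psi$ pointwise on $\Sc(\R^d)$ and extend to $\Sc'(\R^d)$ by duality, so there is nothing to add there. For part (a), however, the paper gives no proof at all — it simply cites \cite{MR1999259}*{Theorem 2.1} — so your self-contained three-stage argument is a genuinely different route relative to the text, and its outline is sound. In Stage 1 the key identities are correct: $\|\phi\|_p=\|H^p\phi\|_0$ for integer $p\geq 0$, the conjugation $\tau_{-x}H\tau_x=H+2\,x\cdot y+|x|^2$ (so that $\|H^p\tau_x\phi\|_0=\|(H+2x\cdot y+|x|^2)^p\phi\|_0$ by translation invariance of Lebesgue measure), and the fact that, like $\partial_j$, multiplication by $y_j$ loses only half an index on the Hermite--Sobolev scale; since each ordered factor product $B_\alpha$ then maps $\Sc_p$ into some $\Sc_q$ with $q\geq 0\,$, and each scalar coefficient has degree at most $2p$ in $x$, you get $\|\tau_x\phi\|_p\leq P_{2p}(|x|)\|\phi\|_p$ on $\Sc$ and hence on $\Sc_p(\R^d)$ by density. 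Duality then handles negative integers, and complex interpolation is legitimate here because in the fixed Hermite basis the $\Sc_p(\R^d)$ are weighted $\ell^2$ spaces, so $[\Sc_{p_0},\Sc_{p_1}]_\theta=\Sc_{(1-\theta)p_0+\theta p_1}$ with equal norms; the interpolated bound grows like $|x|^{2|p|}$, which is dominated by a polynomial of degree $k=2(\floor{|p|}+1)$ as claimed. Two ingredients you assert rather than prove should be made explicit in a written version: the recurrence $y_j h_n=\sqrt{(n_j+1)/2}\,h_{n+e_j}+\sqrt{n_j/2}\,h_{n-e_j}$ (the paper records only the recurrence for $\partial_i$), and the identification of the interpolation scale. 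In short, the citation buys the paper brevity, while your argument buys a transparent estimate whose only inputs are the Hermite recurrences and standard duality/interpolation.
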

\begin{proof}
See \cite{MR1999259}*{Theorem 2.1} for the proof of part $(a)$. We prove part 
$(b)$.\\
Fix an element $\psi \in \Sc(\R^d)$. Then for $y \in \R^d$,
\[(\tau_{x}\partial_i \psi)(y) = (\partial_i \psi)(y-x)
= \partial_i (\psi(y-x))
= (\partial_i\tau_{x}\psi)(y)\]
i.e. $\tau_{x}\partial_i \psi = \partial_i\tau_{x}\psi$. Via duality we can 
prove $\partial_i \tau_x \phi =  \tau_x \partial_i \phi$ for all $\phi \in
\Sc'(\R^d)$.
\end{proof}
\section{Stochastic Integrals}
In this section, we review basic properties of stochastic integrals, 
specifically those with Hermite-Sobolev valued integrands. Unless stated 
otherwise the we shall use the following notations
throughout this section. Let
$(\Omega, \F, (\F_t)_{t \geq 0},P)$ be a filtered complete probability 
space satisfying the usual conditions. For any real valued
martingale $M$ or a process of finite variation $A$ or a semimartingale $X$, we
assume $M_0 \equiv 0,A_0 \equiv 0,X_0 \equiv 0$. Unless 
stated otherwise stopping times or adapted processes will be 
with respect to the filtration $(\F_t)$ and any such 
real valued process (martingales, processes of finite variation or 
semimartingales) will be assumed to have rcll (right continuous with left 
limits) paths. For our purpose, we do not require the full generality of 
stochastic integration on Hilbert spaces as given in \cite{MR688144}*{Chapter 4 
and 
5}.
\subsection{Stochastic integral with respect to a real valued local $\Ltwo$ 
martingale}
Let $\{M_t\}$ be a real valued $(\F_t)$ adapted local $\Ltwo$ martingale
with rcll paths and $M_0 = 0$. Let $\{\predbracket{M}_t\}$ denote the 
predictable quadratic variation of $M$.
\begin{propn}
Let $p \in \R$. Let $\{G_t\}$ be an 
$\Sc_{-p}(\R^d)$ valued predictable 
process such that 
there exists a localizing sequence $\{\tau_n\}$ with the following property: 
for 
all $t > 0$ and all positive integers $n$,
\[\Exp \int_0^{t\wedge \tau_n} \|G_s\|^2_{-p}\, d\predbracket{M}_s < \infty.\]
Then $\{\int_0^t G_s\, dM_s\}$ is an $\Sc_{-p}(\R^d)$ valued $\{\F_t\}$ 
adapted local $\Ltwo$ martingale.\\
Let 
$\mathbb{K}$ be a 
real separable Hilbert 
space and $T:\Sc_{-p}(\R^d)\to\mathbb{K}$ be a bounded linear operator. Then 
a.s. $t 
\geq 0$,
\[T\int_0^t G_s\, 
dM_s = \int_0^t TG_s\, 
dM_s.\]
In particular, for any $\phi\in \Sc(\R^d)$, a.s. for all $t \geq 0$
\begin{equation}\label{M-st-intg-rlf}
\inpr{\int_0^t G_s\,dM_s}{\phi}=\int_0^t \inpr{G_s}{\phi}\, dM_s.
\end{equation}
\end{propn}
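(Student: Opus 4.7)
The plan is to construct $\int_0^t G_s\,dM_s$ by the standard Hilbert-space template, building it first for simple processes and then extending via an It\={o} isometry on the separable Hilbert space $\Sc_{-p}(\R^d)$, exploiting the fact that $M$ is real valued so the integrand only needs to carry the Hilbert-space structure. For a simple predictable integrand of the form $G_t = \sum_{k=0}^{n-1}\xi_k \indicator{(t_k, t_{k+1}]}(t)$ with each $\xi_k$ an $\F_{t_k}$-measurable $\Sc_{-p}(\R^d)$-valued random variable, the natural definition is
\[\int_0^t G_s\,dM_s := \sum_{k=0}^{n-1}\xi_k\bigl(M_{t\wedge t_{k+1}} - M_{t\wedge t_k}\bigr),\]
which is manifestly $\Sc_{-p}(\R^d)$-valued and, because $M$ is a real local $\Ltwo$ martingale and each $\xi_k$ is bounded (after a further localization), an $\Sc_{-p}(\R^d)$-valued local $\Ltwo$ martingale. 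Projecting onto the orthonormal basis $\{h_n^{-p}\}$ of $\Sc_{-p}(\R^d)$ reduces both the martingale property and the isometry
\[\Exp\Bigl\|\int_0^{t}G_s\,dM_s\Bigr\|_{-p}^2 = \Exp\int_0^{t}\|G_s\|_{-p}^2\,d\predbracket{M}_s\]
to a sum of the classical real-valued statements, using monotone convergence to reassemble the basis expansion.

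Next, I would extend the integral to general $G$ with $\Exp\int_0^{t}\|G_s\|_{-p}^2\,d\predbracket{M}_s < \infty$ by approximating $G$ in the Hilbert space $L^2(\Omega\times[0,t], d P\otimes d\predbracket{M}_s;\Sc_{-p}(\R^d))$ by simple predictable $\Sc_{-p}(\R^d)$-valued processes and using the isometry above to obtain a Cauchy sequence in $L^2(\Omega;\Sc_{-p}(\R^d))$; Doob's inequality upgrades convergence to the uniform-in-$t$ sense, yielding an rcll $\Sc_{-p}(\R^d)$-valued $\Ltwo$ martingale. Under the given hypothesis one only has this bound on $[0, t\wedge\tau_n]$, so I would pass from the $\Ltwo$-martingale case to the local $\Ltwo$-martingale statement by the usual stopping-time patching along $\{\tau_n\}$, verifying consistency on overlapping intervals.

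For the second assertion, the key observation is that if $T:\Sc_{-p}(\R^d)\to\mathbb K$ is bounded linear, then on simple integrands commutativity is immediate from linearity:
\[T\int_0^tG_s\,dM_s = \sum_k (T\xi_k)(M_{t\wedge t_{k+1}} - M_{t\wedge t_k}) = \int_0^t TG_s\,dM_s.\]
Moreover $TG$ inherits the integrability condition since $\|TG_s\|_{\mathbb K}\leq \|T\|\,\|G_s\|_{-p}$, so $\int_0^t TG_s\,dM_s$ is well defined in $\mathbb K$. Passing to the limit along simple approximants, the left side converges in $\mathbb K$ because $T$ is bounded, and the right side converges by the isometry in $\mathbb K$, so the identity carries over. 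Specializing to $T=\inpr{\cdot}{\phi}$ (with $\mathbb K =\R$) yields \eqref{M-st-intg-rlf}.

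The only genuinely delicate point is the construction step itself: one must check that simple $\Sc_{-p}(\R^d)$-valued predictable processes are dense in $L^2(\Omega\times[0,t], d P\otimes d\predbracket{M}_s;\Sc_{-p}(\R^d))$ and that the integral is well defined independently of the approximating sequence. Separability of $\Sc_{-p}(\R^d)$ (using the basis $\{h_n^{-p}\}$) reduces this to the classical density of real-valued simple predictable processes and to monotone/dominated convergence arguments in the basis expansion; once this is in place, everything else is a routine limit-and-localize passage.
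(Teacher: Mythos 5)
Your proposal is correct and follows essentially the same route as the paper: define the integral on predictable step processes, establish the It\={o} isometry $\Exp\|\int_0^t G_s\,dM_s\|_{-p}^2=\Exp\int_0^t\|G_s\|_{-p}^2\,d\predbracket{M}_s$ (you verify it via the basis $\{h_n^{-p}\}$, the paper by a direct expectation computation with vanishing cross terms), extend by density and localize, and obtain the commutation with a bounded operator $T$ first for step processes and then in the limit. The only differences are cosmetic, so there is nothing further to flag.
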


\begin{proof}
For simplicity, assume that $M$ is an $\Ltwo$ martingale and $\tau_n = \infty, 
\forall n$. If $G$ is a predictable step process of the form: $G: = \sum_{i=1}^n 
\indicator{(t_{i-1},t_i]}\,g_i$ where $n$ is a 
positive 
integer, $t_0,t_1,\cdots,t_n$ are real numbers satisfying $0 \leq t_0 < t_1 < 
\cdots t_n$ and $g_i$ are an 
$\Sc_{-p}(\R^d)$ valued, $\F_{t_{i-1}}$ measurable random variable. Then
\[\int_0^t G_s \, dM_s := \sum_{i=1}^n 
(M_{t\wedge t_i}-M_{t\wedge t_{i-1}})g_i\]
and
\begin{align*}
\Exp \left\| \int_0^t G_s \, dM_s \right\|^2_{-p} 
&= \Exp \sum_{i = 1}^n \|g_i\|^2_{-p}(M_{t\wedge t_i}-M_{t\wedge 
t_{i-1}})^2\\
&= \Exp \sum_{i = 1}^n \|g_i\|^2_{-p}(\predbracket{M}_{t\wedge t_i} - 
\predbracket{M}_{t\wedge 
t_{i-1}})\\
&= \Exp \int_0^t \|G_s\|^2_{-p} \, d\predbracket{M}_s.
\end{align*}
In view of the above isometry we can extend the stochastic integral to 
predictable processes $\{G_t\}$ satisfying the integrability condition as 
mentioned in the statement. Proofs of $(\F_t)$ adaptedness and rcll paths are 
standard.\\
If $T:\Sc_{-p}(\R^d)\to\mathbb{K}$ is a bounded linear operator, then a.s. $t 
\geq 0$,
\[T\int_0^t G_s\, 
dM_s = \int_0^t TG_s\, 
dM_s\]
holds for predictable step processes. The relation then extends to all $G$ 
satisfying the integrability condition.
\end{proof}

\begin{rem}
\begin{enumerate}
\item If the martingale is continuous, then we can define the integrals for 
integrands $\{G_t\}$
which are progressively measurable.
\item For continuous processes, equation \eqref{M-st-intg-rlf} was 
pointed out in \cite{MR1837298}*{Proposition 
1.3(a)}.
\end{enumerate}
\end{rem}

\subsection{Stochastic Integral with respect to a real finite variation process}
Let $\{A_t\}$ be a real valued $(\F_t)$-adapted process of 
finite
variation
with right continuous paths. We denote its total variation process by 
$\{V_{[0,t]}(A_{\cdot})\}$.\\
Let $\{G_t\}$ be an $\Sc_{-p}(\R^d)$ valued norm-bounded (i.e. there
exists a constant $R > 0$ such that a.s. $\|G_t\|_{-p} \leq R$ for all $t$)
predictable process.\\
Observe that for all $t \geq 0$ and all $\omega$
\begin{equation}\label{intg-cond-fv}
\int_0^{t} \|G_s\|_{-p}\, |dA_s| \leq R.V_{[0,t]}(A_{\cdot}) < \infty,
\end{equation}
which allows us to define the $\Sc_{-p}(\R^d)$ valued random variable $\int_0^t 
G_s\, 
dA_s$ as a Bochner integral. Note that the process $\{\int_0^t 
G_s\, 
dA_s\}$ is of 
finite variation and has rcll paths. Furthermore, for each $\phi 
\in \Sc(\R^d)$, $t \geq 0$ we have 
\begin{equation}\label{A-st-intg-rlf}
\inpr{\int_0^t G_s\,dA_s}{\phi}=\int_0^t \inpr{G_s}{\phi}\, dA_s.
\end{equation}
For continuous processes, this result was 
pointed out in \cite{MR1837298}*{Proposition 
1.3(a)}.

\subsection{Stochastic Integral with respect to a real semimartingale}
We recall the decomposition of local martingales (\cite{MR1464694}*{Lemma
23.5}, \cite{MR2020294}*{Chapter III, Theorem 25}).
\begin{thm}[Decomposition of Local Martingales]
Given a local martingale $\{M_t\}$, there exist two local martingales
$\{M_t'\}$, $\{M_t''\}$ one of which has bounded jumps and the other is
of locally integrable variation and a.s.
\[M_t = M_t' + M_t'', \, \forall t \geq 0.\]
\end{thm}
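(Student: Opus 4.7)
The plan is to isolate the large jumps of $M$, compensate them to produce a purely discontinuous local martingale of locally integrable variation, and verify that the residual piece has bounded jumps. Fix a truncation level $a>0$ and set
\[V_t := \sum_{0<s\leq t} \Delta M_s \, \indicator{\{|\Delta M_s|>a\}}.\]
Since $M$ is rcll, only finitely many jumps exceed $a$ in modulus on any compact interval, so $V$ is an adapted rcll process of finite variation. Combining a localising sequence $\{\tau_n\}$ reducing $M$ to a bounded $\Ltwo$ martingale with the stopping times $\sigma_n := \inf\{t : V_{[0,t]}(V_{\cdot}) \geq n\}$, and using $|\Delta V_{\sigma_n}| \leq |\Delta M_{\sigma_n}|$ together with the local boundedness of $M^{\tau_n}_{-}$, one shows that $V$ has locally integrable variation. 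The predictable dual projection $\tilde V$ therefore exists, is predictable and of locally integrable variation, and $M'' := V - \tilde V$ is a local martingale. Setting $M' := M - M''$ gives the candidate decomposition $M = M' + M''$ with both summands local martingales and $M''$ automatically of locally integrable variation.

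The remaining and most delicate step is to bound the jumps of $M'$. A direct computation gives
\[\Delta M'_t \;=\; \Delta M_t - \Delta V_t + \Delta \tilde V_t \;=\; \Delta M_t \, \indicator{\{|\Delta M_t|\leq a\}} + \Delta \tilde V_t,\]
so the first summand is bounded by $a$. Since $\tilde V$ is predictable, its jumps occur only at predictable stopping times $T$ and are given by $\Delta \tilde V_T = \Exp[\Delta V_T \mid \F_{T-}]$; in particular they vanish at totally inaccessible times. At a predictable jump time $T$, the conditional mean zero property $\Exp[\Delta M_T \mid \F_{T-}] = 0$ for local martingales (valid after a further reduction making $\Delta M_T$ integrable) rewrites
\[\Delta \tilde V_T \;=\; -\Exp\!\bigl[\Delta M_T \, \indicator{\{|\Delta M_T|\leq a\}} \,\big|\, \F_{T-}\bigr],\]
which has modulus at most $a$. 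Hence $|\Delta M'|\leq 2a$ throughout, which exhibits $M'$ as a local martingale with bounded (in fact uniformly bounded) jumps.

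The main obstacle is the simultaneous localisation needed to justify both the existence of $\tilde V$ and the conditional mean zero identity at predictable jump times of a general local martingale: one has to thread a single sequence of stopping times through the construction of the compensator, the integrability of $\Delta M_T$ at the predictable times, and the control of $V_{[0,t]}(V_\cdot)$. This technical core is precisely what is packaged in the cited results \cite{MR1464694}*{Lemma 23.5} and \cite{MR2020294}*{Chapter III, Theorem 25}, which is why we invoke the theorem rather than reproducing the argument in full.
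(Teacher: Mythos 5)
The paper gives no proof of this statement at all---it is quoted from \cite{MR1464694}*{Lemma 23.5} and \cite{MR2020294}*{Chapter III, Theorem 25}---so your sketch should be measured against the standard textbook argument, which is indeed the route you take: compensate the large jumps and control the jumps of the remainder via $\Exp[\Delta M_T\mid \F_{T-}]=0$ at predictable times $T$. That outline is essentially Protter's proof, and its second half (predictability of $\tilde V$, $\Delta \tilde V_T=-\Exp[\Delta M_T\indicator{\{|\Delta M_T|\le a\}}\mid\F_{T-}]$, hence $|\Delta M'|\le 2a$) is fine. But there is one step that, as written, is both false and circular: you cannot in general find a localizing sequence $\{\tau_n\}$ ``reducing $M$ to a bounded $\Ltwo$ martingale''. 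The definition of a local martingale only yields reduction to (uniformly integrable) martingales; a local martingale whose jumps have infinite second moment (e.g.\ a compensated compound Poisson process with heavy-tailed jump distribution) is not locally $\Ltwo$, and stopping cannot cure this because the stopped process still contains the jump occurring at the stopping time. The impossibility of such a reduction is precisely the reason this decomposition theorem is needed, and it is exactly what the paper invokes the theorem for---to conclude that every semimartingale splits as a local $\Ltwo$ martingale plus a finite variation process---so assuming it at the outset begs the question.

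The gap is confined to the verification that $V_t=\sum_{s\le t}\Delta M_s\,\indicator{\{|\Delta M_s|>a\}}$ has locally integrable variation, and it is repaired by the correct localization: choose $\tau_n\uparrow\infty$ with $M^{\tau_n}$ a uniformly integrable martingale, and set $\rho_n:=\inf\{t:|M_t|\ge n\}$, $\sigma_n:=\inf\{t:V_{[0,t]}(V_\cdot)\ge n\}$, $R_n:=\tau_n\wedge\rho_n\wedge\sigma_n$. Then $V_{[0,R_n]}(V_\cdot)\le n+|\Delta V_{R_n}|\le n+|\Delta M_{R_n}|\le 2n+|M_{R_n}|$, because $|M_{s}|< n$ for $s<R_n$ gives $|M_{R_n-}|\le n$, and $\Exp|M_{R_n}|<\infty$ by optional sampling of the uniformly integrable martingale $M^{\tau_n}$. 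With that substitution (no $\Ltwo$ or boundedness property of $M$ itself is ever needed, only the local boundedness of the left-limit process and the integrability of $M_{R_n}$), the compensator $\tilde V$ exists, $M'':=V-\tilde V$ is a local martingale of locally integrable variation, and the rest of your argument goes through; this is the proof packaged in \cite{MR2020294}*{Chapter III, Theorem 25}.
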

Since any local martingale with bounded jumps is locally $\Ltwo$, any 
real semimartingale $X$ has a decomposition (not necessarily unique), a.s. $X_t 
= 
M_t + A_t, t \geq 0$, where $\{M_t\}$ is a local $\Ltwo$ martingale and 
$\{A_t\}$ is a process of finite variation.
\\
Let $\{G_t\}$ be an $\Sc_{-p}(\R^d)$ valued norm-bounded (i.e. there
exists a constant $R > 0$ such that a.s. $\|G_t\|_{-p} \leq R$ for all $t$)
predictable process. Now
the stochastic integral of $\{G_t\}$ with respect to $\{X_t\}$ is defined to be:
\[\int_0^tG_s\,dX_s:=\int_0^tG_s\,dM_s+\int_0^tG_s\,dA_s,\, t \geq 0.\]
\begin{thm}\label{well-defined-integral}
The process $\{\int_0^tG_s\,dX_s\}$ is well-defined, i.e. the definition does 
not depend on 
the decomposition 
$X=M+A$.
\end{thm}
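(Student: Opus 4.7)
The plan is to show that any two decompositions yield the same Hilbert-space-valued integral by reducing to the classical real-valued fact that an Itô integral and a Lebesgue--Stieltjes integral coincide whenever the integrator is simultaneously a local $\Ltwo$ martingale and a process of finite variation.

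Suppose $X = M + A = M' + A'$ are two decompositions with $M, M'$ local $\Ltwo$ martingales and $A, A'$ of finite variation, all vanishing at $0$. Set $N_t := M_t - M'_t = A'_t - A_t$. Then $N$ is both a local $\Ltwo$ martingale (as the difference $M - M'$) and a finite variation process (as the difference $A' - A$). Writing out the definition of $\int_0^t G_s\,dX_s$ from each decomposition, the required equality
\[\int_0^t G_s\,dM_s + \int_0^t G_s\,dA_s = \int_0^t G_s\,dM'_s + \int_0^t G_s\,dA'_s\]
reduces, after rearranging, to showing that the $\Sc_{-p}(\R^d)$-valued random variables $\int_0^t G_s\,dN_s$ obtained by treating $N$ as the martingale $M - M'$ versus as the finite variation process $A' - A$ are a.s.\ equal.

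To prove this equality, I will test both against an arbitrary $\phi \in \Sc(\R^d)$. By \eqref{M-st-intg-rlf}, the action of the martingale-type integral is the real-valued Itô integral $\int_0^t \inpr{G_s}{\phi}\,dN_s$, while by \eqref{A-st-intg-rlf}, the action of the finite-variation-type integral is the Lebesgue--Stieltjes integral $\int_0^t \inpr{G_s}{\phi}\,dN_s$. The real-valued integrand $\{\inpr{G_s}{\phi}\}$ is predictable and bounded (since $\|G_s\|_{-p}$ is bounded and $\phi \in \Sc_p(\R^d)$), so the integrability conditions for both constructions are met. The main step is then the standard real-variable fact: for a real process $N$ that is simultaneously a local $\Ltwo$ martingale and of finite variation (so its predictable quadratic variation vanishes on the continuous part, and its jump structure is compatible), the Itô and Lebesgue--Stieltjes integrals of a bounded predictable process coincide a.s.

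The main obstacle, such as it is, is invoking that classical real-valued identity cleanly; once we have it, the two $\Sc_{-p}(\R^d)$-valued random variables $\int_0^t G_s\,dN_s$ have the same action against every $\phi \in \Sc(\R^d)$ on a common null set depending on $\phi$. Using separability of $\Sc_p(\R^d)$ together with the density of $\Sc(\R^d)$ in $\Sc_p(\R^d)$, I can pick a countable dense family $\{\phi_k\} \subset \Sc(\R^d)$, merge the corresponding null sets, and conclude that the two integrals agree as elements of $\Sc_{-p}(\R^d)$ a.s. Doing this at each rational $t$ and using right-continuity of both integrals then gives equality for all $t \geq 0$ a.s., completing the well-definedness.
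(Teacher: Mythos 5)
Your proposal is correct and follows essentially the same route as the paper: reduce to the scalar case by testing the $\Sc_{-p}(\R^d)$-valued integrals against a countable family of test functions, invoke the classical fact that the stochastic and Stieltjes integrals of a bounded predictable process agree when the integrator is simultaneously a local $\Ltwo$ martingale and of finite variation (the paper's Lemma \ref{intg-equiv}, cited from Kallenberg), and merge the null sets. The only cosmetic differences are that you form the difference process $N=M-M'=A'-A$ explicitly and use a countable dense family plus rational times and right-continuity, whereas the paper works with the sum directly, tests against the Hermite functions $\{h_n\}$, and handles the local $\Ltwo$ case by stopping times.
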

To prove this, we first recall the following result.
\begin{lem}\label{intg-equiv}
Let $\{V_t\}$ be a real valued bounded predictable processes. Let $\{M_t\}$
be an $\Ltwo$ martingale and $\{A_t\}$ be a process of finite
variation such that a.s.
\[M_t = A_t,\, \forall t \geq 0.\]
Then a.s.
\[\int_0^t V_s \, dM_s=\int_0^t V_s \, dA_s,\, \forall t \geq 0.\]
\end{lem}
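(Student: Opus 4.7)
The plan is a standard monotone class argument. Let $\mathcal H$ denote the class of bounded real-valued predictable processes $\{V_t\}$ for which $\int_0^t V_s\, dM_s = \int_0^t V_s\, dA_s$ holds a.s.\ for all $t \geq 0$. The goal is to show first that $\mathcal H$ contains all simple predictable step processes, and second that $\mathcal H$ is closed under bounded pointwise limits; since simple step processes generate the predictable $\sigma$-algebra, the functional monotone class theorem then yields the conclusion for all bounded predictable $V$.

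For the base case, take $V = H\indicator{(s,u]}$ with $0 \leq s < u$ and $H$ bounded and $\F_s$-measurable. Directly from the definitions of the two integrals on step processes, one has
\[\int_0^t V_r\, dM_r = H(M_{t\wedge u} - M_{t\wedge s}), \qquad \int_0^t V_r\, dA_r = H(A_{t\wedge u} - A_{t\wedge s}),\]
and these coincide pathwise since $M_r = A_r$ a.s. Linearity extends the equality to arbitrary finite linear combinations of such indicators, i.e.\ to all simple predictable step processes.

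For the closure step, suppose $V^n \in \mathcal H$ with $|V^n| \leq C$ and $V^n \to V$ pointwise. The It\^o isometry combined with dominated convergence against the finite measure $d\predbracket{M}_s$ on $[0,t]$ gives
\[\Exp\bracket{\int_0^t V^n_s\, dM_s - \int_0^t V_s\, dM_s}^2 = \Exp\int_0^t (V^n_s - V_s)^2\, d\predbracket{M}_s \to 0,\]
so $\int_0^t V^n\, dM \to \int_0^t V\, dM$ in $L^2$. For the finite variation side, since $V_{[0,t]}(A_{\cdot})$ may fail to be integrable, I would localize by $\tau_k := \inf\{t \geq 0 : V_{[0,t]}(A_{\cdot}) > k\}$; pathwise dominated convergence on the bounded variation measure $|dA|$ restricted to $[0, t\wedge \tau_k]$ yields $\int_0^{t\wedge\tau_k} V^n\, dA \to \int_0^{t\wedge\tau_k} V\, dA$ a.s., and sending $k \to \infty$ upgrades this to convergence in probability of $\int_0^t V^n\, dA$ to $\int_0^t V\, dA$. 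Since the two sequences agree a.s.\ at every $n$, so do their limits, which places $V$ in $\mathcal H$.

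The main technical point I anticipate is ensuring the localizations of the two integrals are compatible and that the equality, obtained in a mixture of $L^2$ and in-probability senses, upgrades to an a.s.\ equality holding simultaneously for all $t$; this is handled by passing to a suitable subsequence and invoking the rcll property of both limit processes so that almost sure equality at a countable dense set of times extends to all $t \geq 0$.
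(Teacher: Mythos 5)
Your proof is correct. The paper gives no argument of its own for this lemma --- it simply refers to the proof of Theorem 23.4 in \cite{MR1464694} --- and the monotone class scheme you use (agreement on predictable rectangles, closure under uniformly bounded pointwise limits via the isometry on the martingale side and dominated convergence on the Stieltjes side, then upgrading the fixed-$t$ identity to all $t\geq 0$ through the rcll paths of both integral processes) is essentially the standard argument contained in that reference, so there is nothing to object to in substance. One small simplification: the localization by $\tau_k$ on the finite variation side is superfluous, since for each fixed $\omega$ the total variation $V_{[0,t]}(A_{\cdot})$ is finite for every $t$; hence pathwise dominated convergence against the finite measure $|dA_s|$ on $[0,t]$ already gives $\int_0^t V^n_s\,dA_s \to \int_0^t V_s\,dA_s$ almost surely, and integrability of the total variation would only be needed for an $L^1$-type statement, which you never use. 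With that convergence almost sure rather than merely in probability, the passage to the common limit and the final extension over a countable dense set of times go through exactly as you describe.
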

\begin{proof}
This result is included in the proof of Theorem 23.4 in \cite{MR1464694}.
\end{proof}
\begin{proof}[Proof of Theorem \ref{well-defined-integral}]
First assume that $M$ is an $\Ltwo$ martingale. By Lemma \ref{intg-equiv}, for 
each 
$\phi \in \Sc(\R^d)$, the 
process 
$\{\int_0^t\inpr{G_s}{\phi}dM_s+\int_0^t\inpr{G_s}{\phi}dA_s\}$ does not 
depend on the decomposition $X=M+A$. Now varying $\phi$ in the 
countable set $\{h_n: n\in \mathbb{Z}^d_+\}$, we get a common null set 
$\widetilde \Omega$ such that for all $\omega \in 
\Omega\setminus\widetilde\Omega$, for all $n \in \mathbb{Z}^d_+$ and for all $t 
\geq 0$, we have
\[\inpr{\int_0^t G_s\,dM_s+\int_0^t G_s\,dA_s}{h_n}=\int_0^t\inpr{
G_s }{h_n}\,dM_s+\int_0^t\inpr{G_s}{h_n}\,dA_s.\]
This identifies the $\Sc_{-p}(\R^d)$ process 
$\{\int_0^t G_s\,dM_s+\int_0^t G_s\,dA_s\}$ independent of the decomposition 
$X=M+A$.\\
If $M$ is a local $\Ltwo$ martingale, the proof can be completed using stopping 
time arguments.
\end{proof}
\section{The It\={o} Formula}
Given $\phi \in \Sc'(\R^d)$, there exists a $p > 0$ such that $\phi 
\in \Sc_{-p}(\R^d)$. Let $X_t=(X^1_t,\cdots,X^d_t)$ be an $\R^d$ valued 
$(\F_t)$ semimartingale with rcll paths with the decomposition a.s.
\[X_t = X_0+M_t+A_t,\, t \geq 0\]
where 
$M_t=(M^1_t,\cdots,M^d_t)$ is an $\R^d$ valued locally square integrable 
martingale and $A_t=(A^1_t,\cdots,A^d_t)$ is an $\R^d$ valued process of finite 
variation. Both $\{M_t\}$ and $\{A_t\}$ have rcll paths and 
$M_0 = 0 = A_0$ a.s. By Proposition \ref{tau-x-estmte}, $\{\tau_{X_t}\phi\}$ is
an $\Sc_{-p}(\R^d)$ valued process. Recall that the process $\{X_{t-}\}$ 
defined by
\[X_{t-} := \begin{cases}
X_0,\, \text{if}\, t = 0.\\
\lim_{s \downarrow t}X_s,\, \text{if}\, t > 0.
\end{cases},\]
is predictable (see \cite{MR1943877}*{Chapter I, 2.6 Proposition}).
\begin{lem}\label{tau-predictable-processes}
Let $\phi, \{X_t\}$ be as above. Then for any $1 \leq i \leq d$ and $1 \leq j 
\leq d$,
\begin{enumerate}
\item $\{\tau_{X_{t-}}\phi\}$ is an $\Sc_{-p}(\R^d)$ valued predictable process.
\item $\{\partial_i\tau_{X_{t-}}\phi\}$ is an $\Sc_{- p - 
\frac{1}{2}}(\R^d)$ valued predictable process.
\item $\{\partial_{ij}^2\tau_{X_{t-}}\phi\}$ is an $\Sc_{- p - 1}(\R^d)$ 
valued predictable process.
\end{enumerate}
\end{lem}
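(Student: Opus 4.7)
The plan is to reduce all three claims to part (1) via Proposition \ref{tau-x-estmte}(b), and then to prove (1) by composing the predictable $\R^d$-valued process $\{X_{t-}\}$ with the (continuous, hence Borel) map $F : \R^d \to \Sc_{-p}(\R^d)$, $F(x) := \tau_x \phi$.

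For the reductions, Proposition \ref{tau-x-estmte}(b) gives $\partial_i \tau_{X_{t-}}\phi = \tau_{X_{t-}}(\partial_i \phi)$ and $\partial_{ij}^2 \tau_{X_{t-}}\phi = \tau_{X_{t-}}(\partial_{ij}^2 \phi)$. Since $\partial_i : \Sc_{-p}(\R^d) \to \Sc_{-p-\frac{1}{2}}(\R^d)$ is bounded (and $\partial_{ij}^2$ is bounded into $\Sc_{-p-1}$), the distributions $\partial_i \phi$ and $\partial_{ij}^2 \phi$ live in $\Sc_{-p-\frac{1}{2}}$ and $\Sc_{-p-1}$ respectively. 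Applying part (1) to these distributions in their respective Hermite--Sobolev spaces will yield parts (2) and (3).

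To prove (1), I would first show that $F$ is continuous from $\R^d$ into $\Sc_{-p}(\R^d)$. Fix $x_0$ and a bounded neighbourhood $B$ of $x_0$; by Proposition \ref{tau-x-estmte}(a) the operator norms $\|\tau_x\|_{\Sc_{-p}\to\Sc_{-p}}$ are uniformly bounded on $B$. Choose $\phi_n \in \Sc(\R^d)$ with $\|\phi_n - \phi\|_{-p} \to 0$, and estimate
\[\|\tau_x \phi - \tau_{x_0}\phi\|_{-p} \leq \bigl(\|\tau_x\|_{\Sc_{-p}\to\Sc_{-p}} + \|\tau_{x_0}\|_{\Sc_{-p}\to\Sc_{-p}}\bigr)\|\phi - \phi_n\|_{-p} + \|\tau_x \phi_n - \tau_{x_0}\phi_n\|_{-p}.\]
For $\phi_n \in \Sc(\R^d)$, the translation $x \mapsto \tau_x \phi_n$ is continuous in the Schwartz topology and hence in $\|\cdot\|_{-p}$, so a standard three-$\varepsilon$ argument (pick $n$ large, then $x$ close to $x_0$) delivers continuity of $F$ at $x_0$.

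With $F$ continuous and $\Sc_{-p}(\R^d)$ a separable Hilbert space, the composition $(t,\omega) \mapsto F(X_{t-}(\omega))$ is $\mathcal{P}$-measurable into $\Sc_{-p}(\R^d)$, because $\{X_{t-}\}$ is $\R^d$-valued predictable by the cited result from \cite{MR1943877}. That is exactly part (1). The main obstacle I anticipate is establishing the strong continuity of $F$ in the negative-index norm for an arbitrary $\phi \in \Sc_{-p}(\R^d)$; the polynomial operator-norm bound of Proposition \ref{tau-x-estmte}(a) is the essential ingredient that keeps the density approximation uniform on bounded sets.
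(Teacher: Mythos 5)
Your proposal is correct and follows essentially the same route as the paper: part (1) comes from composing the predictable process $\{X_{t-}\}$ with the continuous map $x \mapsto \tau_x\phi$ into $\Sc_{-p}(\R^d)$, and parts (2)--(3) follow from the commutation $\partial_i\tau_x = \tau_x\partial_i$ together with the boundedness of $\partial_i$ (resp.\ $\partial^2_{ij}$) into $\Sc_{-p-\frac{1}{2}}(\R^d)$ (resp.\ $\Sc_{-p-1}(\R^d)$). The only difference is that you prove the continuity of $x \mapsto \tau_x\phi$ directly via density of $\Sc(\R^d)$ and the uniform polynomial bound of Proposition \ref{tau-x-estmte}(a), where the paper simply cites the proof of \cite{MR2373102}*{Proposition 3.1}; your argument for that step is sound.
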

\begin{proof}
Since $\{X_{t-}\}$ is predictable and $x 
\mapsto 
\tau_x\phi:\R^d \to \Sc_{-p}(\R^d)$ is continuous (see the proof of 
\cite{MR2373102}*{Proposition 3.1}), the process $\{\tau_{X_{t-}}\phi\}$ is 
predictable.\\
For any $1 \leq i \leq d$, we have $\tau_x(\partial_i\phi) = 
\partial_i\tau_x\phi$ (see Proposition \ref{tau-x-estmte}) and 
$\partial_i:\Sc_{-p}(\R^d)\to\Sc_{-p - 
\frac{1}{2}}(\R^d)$ is a bounded linear operator. Hence 
$\{\partial_i\tau_{X_{t-}}\phi\}$ is an $\Sc_{- p - 
\frac{1}{2}}(\R^d)$ valued predictable process.\\
Similarly for $1 \leq 
i,j \leq d$, the processes $\{\partial_{ij}^2\tau_{X_{t-}}\phi\}$ are  
$\Sc_{- p - 1}(\R^d)$ valued predictable processes.
\end{proof}
Using \cite{MR688144}*{25.5 Corollary 3}, there exists a set 
$\widetilde\Omega$ 
with $P(\widetilde\Omega) = 1$ such that
\[\sum_{s \leq t} |\bigtriangleup 
X_s|^2 < \infty,\, \forall t > 0, \omega 
\in \widetilde\Omega.\]
If $\omega \in \widetilde\Omega$, then there are at most countably many 
jumps of $X$ on $[0,t]$. The following can be easily established.

\begin{lem}\label{jumps-of-X}
Fix $\omega \in \widetilde\Omega$.
\begin{enumerate}[label=(\roman*),ref=\ref{jumps-of-X}(\roman*)]
\item\label{tn-increasing} Fix $t > 0$. Let $\{t_n\}$ be a strictly 
increasing sequence converging to $t$. Then
\[\lim_{n\to \infty} \sum_{s \leq t_n} |\bigtriangleup X_s(\omega)|^2  = 
\sum_{s < 
t} |\bigtriangleup X_s(\omega)|^2.\]
\item\label{tn-decreasing} Fix $t \geq 0$. Let $\{t_n\}$ be a strictly 
decreasing sequence converging to $t$. Then
\[\lim_{n\to \infty} \sum_{t_m < s \leq t_1} |\bigtriangleup X_s(\omega)|^2  = 
\sum_{t 
< s \leq 
t_1} |\bigtriangleup X_s(\omega)|^2.\]
\end{enumerate}
\end{lem}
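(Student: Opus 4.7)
The plan is to reduce both parts to the monotone convergence theorem on the counting measure supported on the (at most countable) set of jump times of $X(\omega)$ in a bounded interval. Since $\omega \in \widetilde\Omega$ gives $\sum_{s \leq T}|\bigtriangleup X_s(\omega)|^2 < \infty$ for every $T > 0$, the squared jumps form a summable family over any such interval, which is exactly what legitimizes interchanging the limit with the (infinite) sum.

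For part (i), I would view $\sum_{s \leq t_n}|\bigtriangleup X_s(\omega)|^2$ as the counting-measure integral of $s \mapsto \indicator{\{s \leq t_n\}}|\bigtriangleup X_s(\omega)|^2$ on the jump set of $X(\omega)$ in $[0,t]$. Because $\{t_n\}$ is strictly increasing to $t$, every $s < t$ eventually satisfies $s \leq t_n$, while a possible jump at $s = t$ is never captured by any of the indicators. The integrands therefore increase pointwise to $\indicator{\{s < t\}}|\bigtriangleup X_s(\omega)|^2$, and monotone convergence delivers the claimed equality.

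Part (ii) is handled in exactly the same manner: $\indicator{\{t_n < s \leq t_1\}}$ increases pointwise to $\indicator{\{t < s \leq t_1\}}$ as $t_n \downarrow t$ strictly, since every $s \in (t,t_1]$ eventually satisfies $s > t_n$ while a potential jump at $s = t$ is always excluded. Monotone convergence (or, equivalently, dominated convergence with envelope $\indicator{\{0 \leq s \leq t_1\}}|\bigtriangleup X_s(\omega)|^2$) then yields the identity.

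There is no real obstacle here, which is presumably why the authors simply remark that the lemma ``can be easily established.'' The one point meriting attention is the role of the strict monotonicity of $\{t_n\}$: without it, a term with $t_n = t$ would allow a jump at $s = t$ to be included in (or excluded from) some finite prefix of the sequence, breaking the stated limits.
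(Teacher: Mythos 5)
Your argument is correct: the paper offers no proof (it simply notes the lemma ``can be easily established''), and your reduction to monotone convergence for the counting measure on the at most countably many jump times, with the observation that strict monotonicity of $\{t_n\}$ keeps a possible jump at $s=t$ out of every term, is exactly the routine verification intended. Nothing is missing.
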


Using Lemma \ref{jumps-of-X}, we get the following estimate which we use later 
in 
Theorem \ref{Ito-formula}.
\begin{lem}\label{translates-by-jumpsofX}
Let $\phi, \{X_t\}$ be as above. Fix $\omega \in \widetilde\Omega$. Fix $\psi 
\in \Sc(\R^d)$. Then for all $s 
\leq t$ 
\[\left|\inpr{\tau_{X_s}\phi - \tau_{X_{s-}}\phi
+\sum_{i=1}^d (\bigtriangleup 
X^i_s\,\partial_i\tau_{X_{s-}}\phi)}{\psi}\right| \leq C(t,\omega) .\, 
|\!\bigtriangleup X_s|^2 \|\psi\|_{p+1},\]
and hence
\begin{equation}\label{tau-jumpsofX-bnd}
\|\tau_{X_s}\phi - 
\tau_{X_{s-}}\phi
+\sum_{i=1}^d (\bigtriangleup 
X^i_s\,\partial_i\tau_{X_{s-}}\phi)\|_{-p-1} \leq C(t,\omega). 
|\!\bigtriangleup X_s|^2.
\end{equation}
Here $C(t,\omega)$ is a positive constant depending on $t,\omega$ 
and is also non-decreasing in $t$. In particular, \[\tau_{X_t}\phi - 
\tau_{X_{t-}}\phi
+\sum_{i=1}^d (\bigtriangleup 
X^i_t\,\partial_i\tau_{X_{t-}}\phi) = 0,\, \text{if}\; |\!\bigtriangleup X_t| 
= 0.\]
\end{lem}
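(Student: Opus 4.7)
My plan is to reduce the estimate to a second-order Taylor expansion of the test function $\psi$. Since translations compose via $\tau_{x+h} = \tau_h\tau_x$, I first write $\tau_{X_s}\phi = \tau_{\bigtriangleup X_s}(\tau_{X_{s-}}\phi)$ and abbreviate $\eta := \tau_{X_{s-}}\phi \in \Sc_{-p}(\R^d)$ and $h := \bigtriangleup X_s \in \R^d$. The expression inside the pairing becomes $\tau_h\eta - \eta + \sum_{i=1}^d h^i\partial_i\eta$, and transferring each of these operations onto $\psi$ via the definitions of $\tau_x$ and $\partial_i$ on $\Sc'(\R^d)$ turns the left-hand side of the desired inequality into
\[
\left|\inpr{\eta}{\tau_{-h}\psi - \psi - \sum_{i=1}^d h^i\partial_i\psi}\right|.
\]

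Next I apply the integral-remainder form of Taylor's theorem to the smooth function $\psi$: pointwise in $y \in \R^d$,
\[
\tau_{-h}\psi(y) - \psi(y) - \sum_{i=1}^d h^i\partial_i\psi(y)
= \sum_{i,j=1}^d h^i h^j \int_0^1 (1-u)\, \partial_{ij}^2\psi(y+uh)\, du.
\]
I view this as an identity in $\Sc(\R^d)$ and, crucially, as a Bochner integral in $\Sc_p(\R^d)$ of the $\Sc_p$-valued curve $u \mapsto \tau_{-uh}\partial_{ij}^2\psi$. The Bochner norm bound combined with Proposition \ref{tau-x-estmte}(a) and the boundedness of $\partial_{ij}^2 : \Sc_{p+1}(\R^d) \to \Sc_p(\R^d)$ gives an estimate of the form
\[
\|\tau_{-h}\psi - \psi - \sum_{i=1}^d h^i \partial_i \psi\|_p
\leq |h|^2\, Q(|h|)\, \|\psi\|_{p+1},
\]
where $Q$ is a polynomial depending only on $p$ and $d$. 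Pairing with $\eta$ via the $\Sc_{-p}$-$\Sc_p$ duality and using Proposition \ref{tau-x-estmte}(a) once more to control $\|\eta\|_{-p} \leq P_k(|X_{s-}(\omega)|)\|\phi\|_{-p}$ produces the first claimed inequality, with the polynomial factors absorbed into $C(t,\omega)$. For fixed $\omega \in \widetilde\Omega$ and $t > 0$ the rcll path $X$ is bounded on $[0,t]$, so $\sup_{s \leq t}|X_{s-}(\omega)|$ and $\sup_{s \leq t}|\bigtriangleup X_s(\omega)|$ are finite, making $C(t,\omega)$ a well-defined finite constant that is monotone in $t$. The bound \eqref{tau-jumpsofX-bnd} then follows from the dual characterization $\|\xi\|_{-p-1} = \sup_{\|\psi\|_{p+1} \leq 1}|\inpr{\xi}{\psi}|$, and the final vanishing statement is immediate since $|\bigtriangleup X_t| = 0$ forces $X_t = X_{t-}$ and $\bigtriangleup X^i_t = 0$ for every $i$.

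The main point requiring care is the Taylor step: I must justify upgrading the pointwise identity to an $\Sc_p(\R^d)$-valued Bochner integral so that the $\Sc_{-p}$-$\Sc_p$ duality applies cleanly, and then verify that the resulting estimate loses exactly one unit of regularity on $\psi$ rather than more. This is why the boundedness of $\partial_{ij}^2 : \Sc_{p+1}(\R^d) \to \Sc_p(\R^d)$ (two applications of the half-unit shift $\Sc_q \to \Sc_{q-1/2}$ from Section 2) is exactly what dictates the appearance of $\Sc_{-p-1}$, rather than a rougher Hermite-Sobolev space, on the right-hand side of \eqref{tau-jumpsofX-bnd}.
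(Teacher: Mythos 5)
Your argument is correct, but it runs on the dual side compared with the paper's proof, and the two routes rest on different inputs. The paper fixes $\psi$ and Taylor-expands the scalar map $x \mapsto \inpr{\tau_x\phi}{\psi}$ around $X_{s-}$; the $C^2$-regularity needed for this comes from the nontrivial fact (\cite{MR1837298}*{Proposition 1.4}) that $x \mapsto \tau_x\phi$ is a $C^2$ map into some Hermite--Sobolev space, and the remainder is controlled by $\sup_{y\in B(t,\omega)}\|\partial^2_{ij}\tau_y\phi\|_{-p-1}\le \alpha\sup_{y\in B(t,\omega)}\|\tau_y\phi\|_{-p}$, which is exactly the paper's $C(t,\omega)$ up to the factor $\tfrac{1}{2}$. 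You instead expand the test function: writing $\tau_{X_s}\phi=\tau_{\bigtriangleup X_s}\tau_{X_{s-}}\phi$ and dualizing, you Taylor-expand $\tau_{-h}\psi$ with integral remainder, viewed as a Bochner integral of $u\mapsto\tau_{-uh}\partial^2_{ij}\psi$ in $\Sc_p(\R^d)$, and then use Proposition \ref{tau-x-estmte}(a) together with the boundedness of $\partial^2_{ij}:\Sc_{p+1}(\R^d)\to\Sc_p(\R^d)$ before pairing with $\tau_{X_{s-}}\phi$. What your route buys is self-containedness: you only need smoothness of the Schwartz function $\psi$ and the operator bounds already stated in Section 2, avoiding the citation of the $C^2$-regularity of $x\mapsto\tau_x\phi$; the price is the vector-valued Taylor step, which you rightly flag and which is easy here because $u\mapsto\tau_{-uh}\psi$ is a smooth curve in every $\Sc_q(\R^d)$ (so the pointwise and Bochner forms of the remainder agree after testing against elements of $\Sc_{-p}(\R^d)$). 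The resulting constants differ in form (yours is a product of polynomial bounds in $\sup_{s\le t}|X_s(\omega)|$, the paper's is $\tfrac{\alpha}{2}\sup_{y\in B(t,\omega)}\|\tau_y\phi\|_{-p}$), but both are finite and non-decreasing in $t$; to secure monotonicity you should replace $P_k$ and your $Q$ by dominating polynomials with non-negative coefficients, and note $\sup_{s\le t}|X_{s-}(\omega)|\le\sup_{s\le t}|X_s(\omega)|<\infty$ for rcll paths. The passage from the scalar estimate to \eqref{tau-jumpsofX-bnd} by the duality $\|\xi\|_{-p-1}=\sup_{\psi\in\Sc(\R^d),\,\|\psi\|_{p+1}\le1}|\inpr{\xi}{\psi}|$ and the vanishing of the jump term when $|\!\bigtriangleup X_t|=0$ are handled exactly as in the paper.
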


\begin{note}
To simplify 
notations, we shall write $C(t)$ instead of $C(t,\omega)$.
\end{note}

\begin{proof}[Proof of Lemma \ref{translates-by-jumpsofX}]
By \cite{MR1837298}*{Proposition 1.4}, there exists some positive 
integer $n$ such that the map $x \mapsto \tau_x\phi \in \Sc_{-n}(\R^d)$ is a 
$C^2$ 
map. For any fixed $\psi \in
\Sc(\R^d)$ we have $x \mapsto \inpr{\tau_x \phi}{\psi}$ is a $C^2$ map and
\begin{align*}
\partial_i \inpr{\tau_x \phi}{\psi} &= \partial_i \inpr{\phi}{\psi(\cdot+x)} = 
\inpr{\phi}{\partial_i\psi(\cdot+x)}\\
&= 
\inpr{\phi}{\tau_{-x}\partial_i\psi} = - 
\inpr{\partial_i\tau_x\phi}{\psi}.
\end{align*}
For any $1 \leq i,j \leq d$, we have $\partial_{ij}^2 = \partial_i \partial_j 
= \partial_j \partial_i$ on $\Sc'(\R^d)$ and hence $\partial_{ij}^2: 
\Sc_{-p}(\R^d) \to \Sc_{-p-1}(\R^d)$ is a bounded linear operator. Then there 
exists a constant $\alpha>0$ such
that
\begin{equation}\label{sq-partial-bnd}
\|\partial_{ij}^2 \theta\|_{-p-1} \leq \alpha \|\theta\|_{-p},\,, \forall 
\theta \in \Sc_{-p}(\R^d).
\end{equation}
We follow the proof of \cite{MR1464694}*{Theorem 23.7} and define 
$B(t,\omega):= 
\{x \in \R^d
: |x| \leq \sup_{s \leq t}|X_s(\omega)|\}$. Then using Taylor's formula for the 
$C^2$ 
map $x \mapsto \inpr{\tau_x \phi}{\psi}$, we have for all $s \leq t$
\begin{align*}\label{Ito-formula-id2}
&\left| 
\inpr{\tau_{X_s}\phi - \tau_{X_{s-}}\phi
+\sum_{i=1}^d (\bigtriangleup X^i_s\,\partial_i\tau_{X_{s-}}\phi)}{\psi} 
\right|\\
=&\left|\inpr{\tau_{X_s}\phi}{\psi}-\inpr{\tau_{X_{s-}}\phi}{\psi}
+\sum_{i=1}^d \inpr{\partial_i\tau_{X_{s-}}\phi}{\psi}\,
\bigtriangleup X^i_s\right|\\
=&\left|\inpr{\tau_{X_s}\phi}{\psi}-\inpr{\tau_{X_{s-}}\phi}{\psi}
-\sum_{i=1}^d \partial_i\inpr{\tau_{X_{s-}}\phi}{\psi}\,
\bigtriangleup X^i_s\right|\\
\leq&\frac{1}{2}.|\bigtriangleup X_s|^2\,
\left(\sum_{i,j=1}^d \sup_{y \in
B(t,\omega)}|\inpr{\partial_{ij}^2\tau_y\phi}{\psi}|\right)\\
\leq&\frac{1}{2}.|\bigtriangleup X_s|^2
\,\left(\sum_{i,j=1}^d \sup_{y \in
B(t,\omega)}\|\partial_{ij}^2\tau_y\phi\|_{-p-1}\right)\|\psi\|_{p+1}\\
\leq & \frac{\alpha}{2}.|\bigtriangleup X_s|^2\,
\left(\sup_{y \in 
B(t,\omega)}\|\tau_y\phi\|_{-p}\right)\|\psi\|_{p+1}\,(\text{using}\, 
\eqref{sq-partial-bnd}).
\end{align*}
Define $C(t,\omega) := \frac{\alpha}{2}\left(\sup_{y \in 
B(t,\omega)}\|\tau_y\phi\|_{-p}\right)$. Then $C(t,\omega)$ is non-decreasing 
in $t$ and for 
all $s 
\leq t$ 
\[\left|\inpr{\tau_{X_s}\phi - \tau_{X_{s-}}\phi
+\sum_{i=1}^d (\bigtriangleup 
X^i_s\,\partial_i\tau_{X_{s-}}\phi)}{\psi}\right| \leq C(t,\omega) .\, 
|\!\bigtriangleup X_s|^2 \|\psi\|_{p+1}.\]
From above estimate we have
\[\|\tau_{X_s}\phi - 
\tau_{X_{s-}}\phi
+\sum_{i=1}^d (\bigtriangleup 
X^i_s\,\partial_i\tau_{X_{s-}}\phi)\|_{-p-1} \leq C(t,\omega). 
|\!\bigtriangleup X_s|^2.\]
In particular $\tau_{X_t}\phi - 
\tau_{X_{t-}}\phi
+\sum_{i=1}^d (\bigtriangleup 
X^i_t\,\partial_i\tau_{X_{t-}}\phi) = 0$ if $|\!\bigtriangleup X_t| 
= 0$.
\end{proof}
For any 
$i,j=1,\cdots,d$, 
let $\{[X^i,X^j]_t^c\}$ denote the continuous part 
of $\{[X^i,X^j]_t\}$. We now prove the main result of this paper.
\begin{thm}\label{Ito-formula}
Let $p > 0$ and $\phi \in \Sc_{-p}(\R^d)$. Let $X=(X^1,\cdots,X^d)$ be a $\R^d$
valued $(\F_t)$ semimartingale. Let $\bigtriangleup X^i_s$ denote the jump of 
$X^i_s$. 
Then $\{\tau_{X_t}\phi\}$ is an $\Sc_{-p}(\R^d)$ valued semimartingale and
\[\sum_{s \leq t}\left[\tau_{X_s}\phi - \tau_{X_{s-}}\phi +
\sum_{i=1}^d (\bigtriangleup X^i_s\,\partial_i\tau_{X_{s-}}\phi)\right]\] is a 
$\Sc_{-p-1}(\R^d)$ valued process of finite variation and we have the following 
equality 
in 
$\Sc_{-p-1}(\R^d)$, a.s.
\begin{equation}\label{Ito-formula-statement}
\begin{split}
\tau_{X_t}\phi &= \tau_{X_0}\phi - \sum_{i=1}^d \int_0^t
\partial_i\tau_{X_{s-}}\phi\,
dX^i_s + \frac{1}{2}\sum_{i,j=1}^d \int_0^t \partial_{ij}^2\tau_{X_{s-}}\phi\,
d[X^i,X^j]^c_s\\
&+\sum_{s \leq t}\left[\tau_{X_s}\phi - \tau_{X_{s-}}\phi +
\sum_{i=1}^d (\bigtriangleup X^i_s\,\partial_i\tau_{X_{s-}}\phi)\right], \, t 
\geq 0.
\end{split}
\end{equation}
\end{thm}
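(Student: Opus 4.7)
The plan is to reduce the vector-valued identity to the classical scalar It\={o} formula by pairing with a test function, then recover the $\Sc_{-p-1}(\R^d)$-valued identity by running the test function through the countable Hermite basis. So fix $\psi \in \Sc(\R^d)$; as noted in the proof of Lemma \ref{translates-by-jumpsofX}, the scalar function $f_\psi(x) := \inpr{\tau_x \phi}{\psi}$ is of class $C^2$ on $\R^d$, with $\partial_i f_\psi(x) = -\inpr{\partial_i \tau_x \phi}{\psi}$ and $\partial_{ij}^2 f_\psi(x) = \inpr{\partial_{ij}^2 \tau_x \phi}{\psi}$. I would then apply the classical real-valued It\={o} formula for rcll semimartingales to $f_\psi(X_t)$, yielding a scalar identity with integrals against $dX^i$ and $d[X^i, X^j]^c$ and a scalar jump sum.

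The next step is to move $\inpr{\cdot}{\psi}$ outside the integrals on the right-hand side by appealing to the commutation formulas \eqref{M-st-intg-rlf} and \eqref{A-st-intg-rlf}, applied to a decomposition $X^i = M^i + A^i$ and to the real-valued continuous-variation process $[X^i, X^j]^c$. The integrands here are not a priori norm-bounded, so I would localize by the stopping times $\sigma_n := \inf\{t : |X_t| \vee |X_{t-}| > n\}$: on $[0,\sigma_n]$, the polynomial estimate of Proposition \ref{tau-x-estmte}(a) combined with boundedness of $\partial_i$ and $\partial_{ij}^2$ between Hermite-Sobolev spaces provides a uniform bound on the $\Sc_{-p-1}(\R^d)$-norms of $\partial_i \tau_{X_{s-}} \phi$ and $\partial_{ij}^2 \tau_{X_{s-}} \phi$. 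Predictability of these integrands is guaranteed by Lemma \ref{tau-predictable-processes}. Sending $n \to \infty$ in the usual way produces well-defined $\Sc_{-p-1}(\R^d)$-valued stochastic and Bochner integrals.

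For the jump sum, Lemma \ref{translates-by-jumpsofX} gives the key estimate: each summand is dominated in $\Sc_{-p-1}(\R^d)$-norm by $C(t)\,|\!\bigtriangleup X_s|^2$, and $\sum_{s \leq t} |\!\bigtriangleup X_s|^2 < \infty$ a.s.\ on $\widetilde\Omega$. This immediately gives absolute convergence in $\Sc_{-p-1}(\R^d)$ on every bounded time interval, and the total-variation bound $C(t)\sum_{s \leq t} |\!\bigtriangleup X_s|^2$ certifies the finite variation claim. The scalar jump series arising from the classical It\={o} formula is exactly the termwise pairing of $\psi$ against this $\Sc_{-p-1}(\R^d)$-valued series, and pulling the bracket inside the sum is justified by absolute convergence.

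Combining the three steps, the scalar identity from the classical It\={o} formula coincides with the pairing of \eqref{Ito-formula-statement} against $\psi$. Taking $\psi = h_n$ for each multi-index $n$ and discarding a single countable union of null sets produces a set of full measure on which the identity holds paired against every $h_n$ at every rational $t$; rcll of both sides in $\Sc_{-p-1}(\R^d)$ then upgrades this to every $t$, and density of $\{h_n\}$ in $\Sc_{p+1}(\R^d)$ yields equality in $\Sc_{-p-1}(\R^d)$. Finally, rcll of $\{\tau_{X_t}\phi\}$ in $\Sc_{-p}(\R^d)$ follows from rcll of $X$ and the continuity of $x \mapsto \tau_x\phi$ into $\Sc_{-p}(\R^d)$, while \eqref{Ito-formula-statement} itself exhibits a semimartingale decomposition of $\tau_{X_t}\phi$ in the ambient space $\Sc_{-p-1}(\R^d)$. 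The main obstacle is the careful localization that makes the stochastic integrals satisfy the norm-bounded hypothesis of Section 3, together with verifying that the termwise scalar identities assemble into the $\Sc_{-p-1}(\R^d)$-valued identity simultaneously in $t$; once Lemma \ref{translates-by-jumpsofX} controls the jump sum and the classical It\={o} formula is in hand, the remaining bookkeeping is routine but requires attention.
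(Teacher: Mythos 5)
Your proposal is correct and follows essentially the same route as the paper: apply the classical rcll It\={o} formula to the $C^2$ maps $x \mapsto \inpr{\tau_x\phi}{h_n}$, use the commutation identities \eqref{M-st-intg-rlf} and \eqref{A-st-intg-rlf} after localizing so that the predictable integrands are norm-bounded, control the jump sum and its total variation via Lemma \ref{translates-by-jumpsofX}, and assemble the $\Sc_{-p-1}(\R^d)$-valued identity by running over the countable Hermite basis on a common full-measure set. The only differences are cosmetic (your stopping times do not also cap $[X^i,X^j]^c$, and your rational-$t$ plus rcll upgrade is redundant since the scalar formula already holds for all $t$ a.s.), neither of which affects the argument.
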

\begin{proof}
We proceed in steps.
\begin{enumerate}[label=Step \arabic*:]
\item Let $\widetilde \Omega$ be as in Lemma \ref{translates-by-jumpsofX}. 
Then 
$\omega \in \widetilde\Omega$ implies
(see equation \eqref{tau-jumpsofX-bnd})
\begin{equation}
\sum_{s \leq t} \| \tau_{X_s}\phi - 
\tau_{X_{s-}}\phi
+\sum_{i=1}^d (\bigtriangleup 
X^i_s\,\partial_i\tau_{X_{s-}}\phi)\|_{-p-1} \leq C(t)\sum_{s \leq t} 
|\!\bigtriangleup X_s|^2 < \infty.
\end{equation}
Recall that if $\omega \in \widetilde\Omega$, then there are at most countably 
many 
jumps of $X$ on $[0,t]$. In view of 
the
above estimate we define for any $t \geq 0$
\[Y_t(\omega):=
\sum_{s \leq t} \left[ \tau_{X_s(\omega)}\phi - 
\tau_{X_{s-}(\omega)}\phi
+\sum_{i=1}^d (\bigtriangleup 
X^i_s(\omega)\,\partial_i\tau_{X_{s-}(\omega)}\phi) \right],\, \omega  \in 
\widetilde\Omega
\]
and set $Y_t(\omega):= 0,\, \omega  \in (\widetilde\Omega)^c.$
Then $\{Y_t\}$ is a well-defined $\Sc_{-p-1}(\R^d)$ valued $(\F_t)$ 
adapted process.
\item Now we show $\{Y_t\}$ has rcll paths and is a process of finite 
variation. Fix $\omega \in \widetilde\Omega$. We claim
\begin{enumerate}[label=(\roman*)]
\item $Y_{t-} = \sum_{s < t} \left[ \tau_{X_s}\phi - 
\tau_{X_{s-}}\phi
+\sum_{i=1}^d (\bigtriangleup 
X^i_s\,\partial_i\tau_{X_{s-}}\phi) \right], \, t > 0$.
\item $Y_{t+} = \sum_{s \leq t} \left[ \tau_{X_s}\phi - 
\tau_{X_{s-}}\phi
+\sum_{i=1}^d (\bigtriangleup 
X^i_s\,\partial_i\tau_{X_{s-}}\phi) \right] = Y_t,\, t \geq 0$.
\end{enumerate}
We prove (i). Let $\{t_m\}$ be an increasing sequence converging to 
$t$. Then
\begin{align*}
&\left\|\sum_{s < t} \left[ \tau_{X_s}\phi - 
\tau_{X_{s-}}\phi
+\sum_{i=1}^d (\bigtriangleup 
X^i_s\,\partial_i\tau_{X_{s-}}\phi) \right] - Y_{t_m}\right\|_{-p-1}\\
&= \left\|\sum_{t_m < s < t} \left[ \tau_{X_s}\phi - 
\tau_{X_{s-}}\phi
+\sum_{i=1}^d (\bigtriangleup 
X^i_s\,\partial_i\tau_{X_{s-}}\phi) \right]\right\|_{-p-1}\\
&\leq \sum_{t_m < s < t} \left\| \tau_{X_s}\phi - 
\tau_{X_{s-}}\phi
+\sum_{i=1}^d (\bigtriangleup 
X^i_s\,\partial_i\tau_{X_{s-}}\phi) \right\|_{-p-1}\\
&\leq C(t)\sum_{t_m < s < t} 
|\!\bigtriangleup X_s|^2 \,(\text{using}\, \eqref{tau-jumpsofX-bnd})\\
&= C(t) \left[\sum_{s < 
t} |\bigtriangleup X_s|^2 - \sum_{s \leq t_m} |\bigtriangleup 
X_s|^2   
\right] \xrightarrow{m \to \infty} 0 \,(\text{by Lemma}\, 
\ref{tn-increasing}).
\end{align*}
This proves (i). Proof of (ii) is similar. Now using (i),(ii) we have on 
$\widetilde \Omega$
\[\bigtriangleup Y_t = \tau_{X_t}\phi - 
\tau_{X_{t-}}\phi
+\sum_{i=1}^d (\bigtriangleup 
X^i_t\,\partial_i\tau_{X_{t-}}\phi),\]
and $\bigtriangleup Y_t = 0$ if $\bigtriangleup X_t  = 0$. Now using 
\eqref{tau-jumpsofX-bnd}, we also have
\[\sum_{s \leq t}\|\bigtriangleup 
Y_s\|_{-p-1} \leq C(t)\sum_{s \leq t} 
|\!\bigtriangleup X_s|^2 < \infty,\; \omega \in \widetilde \Omega\]
and $Y_t = \sum_{s \leq t} \bigtriangleup Y_s$. We have shown $\{Y_t\}$ has 
rcll paths. Now we show that $\{Y_t\}$ has paths of finite variation.\\
Let $\omega \in \widetilde\Omega$ and $t > 0$. Let $\Partition = \{0=t_0 < 
t_1 < \cdots < t_m = t\}$ be a partition of $[0,t]$. Then
\begin{align*}
&\sum_{i=1}^m \|Y_{t_i} - Y_{t_{i-1}}\|_{-p-1}\\
&= \sum_{i=1}^m \left\|\sum_{t_{i-1} < s \leq t_i} \left[ \tau_{X_s}\phi - 
\tau_{X_{s-}}\phi
+\sum_{i=1}^d (\bigtriangleup 
X^i_s\,\partial_i\tau_{X_{s-}}\phi) \right]\right\|_{-p-1}\\
&\leq \sum_{i=1}^m \sum_{t_{i-1} < s \leq t_i} \left\| \tau_{X_s}\phi - 
\tau_{X_{s-}}\phi
+\sum_{i=1}^d (\bigtriangleup 
X^i_s\,\partial_i\tau_{X_{s-}}\phi) \right\|_{-p-1}\\
&= \sum_{s \leq t} \left\| \tau_{X_s}\phi - 
\tau_{X_{s-}}\phi
+\sum_{i=1}^d (\bigtriangleup 
X^i_s\,\partial_i\tau_{X_{s-}}\phi) \right\|_{-p-1}\\
&\leq C(t)\sum_{s \leq t} 
|\!\bigtriangleup X_s|^2.
\end{align*}
Since the quantity $C(t)\sum_{s \leq t} 
|\!\bigtriangleup X_s|^2$ is independent of the choice of the partition 
$\Partition$, we have $\{Y_t\}$ is of finite variation 
with
\[Var_{[0,t]}(Y_{\cdot}) \leq C(t)\sum_{s \leq t} 
|\!\bigtriangleup X_s|^2\]
on $\widetilde \Omega$.
\item To complete the proof we need to verify the 
following equality in $\Sc_{-p-1}(\R^d)$, a.s. for all $t \geq 0$
\begin{equation*}
Y_t = 
\tau_{X_t}\phi -\tau_{X_0}\phi + \sum_{i=1}^d \int_0^t
\partial_i\tau_{X_{s-}}\phi\,
dX^i_s
- \frac{1}{2}\sum_{i,j=1}^d \int_0^t \partial_{ij}^2\tau_{X_{s-}}\phi\,
d[X^i,X^j]^c_s.
\end{equation*}
First we assume that the processes  
$\{X_{t-}\},\{[X^i,X^j]^c_t\},\,i,j=1,\cdots,d$ are 
bounded. Since $\partial_i:\Sc_{-p}(\R^d)\to\Sc_{-p-\frac{1}{2}}(\R^d)$ is a 
bounded linear
operator, by Proposition \ref{tau-x-estmte}, we have for 
all $t \geq 0, 
i=1,\cdots,d$
\[\|\partial_i\tau_{X_{t-}}\phi\|_{-p-\frac{1}{2}}\leq C.
\|\tau_{X_{t-}}\phi\|_{-p}\leq C.P_k(|X_{t-}|)\|\phi\|_{-p}\leq
C',\]
where $C,C' > 0$ are appropriate constants. Similarly, there exists a constant
$C'' > 0$ such that
\[\|\partial_{ij}\tau_{X_{t-}}\phi\|_{-p-1}\leq C'',\, \forall t \geq 0, 
i,j=1,\cdots,d.\]
Hence 
$\{\tau_{X_{t-}}\phi\},\{\partial_i\tau_{X_{t-}}\phi\},\{
\partial^2_{ij}\tau_{X_{t-}}\phi\}$ are norm-bounded predictable processes 
(see Lemma \ref{tau-predictable-processes}). As per the results mentioned in 
the 
previous section, we can define stochastic integrals \[I^1_t:=\sum_{i=1}^d
\int_0^t \partial_i\tau_{X_{s-}}\phi\,
dX^i_s,\quad I^2_t:=\sum_{i,j=1}^d
\int_0^t \partial_{ij}^2\tau_{X_{s-}}\phi\,
d[X^i,X^j]^c_s,\, t \geq 0\]
which are respectively $\Sc_{-p-\frac{1}{2}}(\R^d)$ and
$\Sc_{-p-1}(\R^d)$ valued and have rcll paths.

For $n\in \mathbb{Z}^d_+$ applying the It\={o} formula (see 
\cite{MR1464694}*{Theorem 
23.7}) to
the $C^2$ map $x \mapsto \inpr{\tau_x \phi}{h_n}$ we have, a.s. for all $t \geq 
0$
\begin{align}\label{Ito-psi}
\inpr{\tau_{X_t}\phi}{h_n} &= \inpr{\tau_{X_0}\phi}{h_n} -
\underbrace{\sum_{i=1}^d
\int_0^t \inpr{\partial_i\tau_{X_{s-}}\phi}{h_n}\,
dX^i_s}_{=\inpr{I^1_t}{h_n}}\notag\\
&+\frac{1}{2}\underbrace{\sum_{i,j=1}^d
\int_0^t \inpr{\partial_{ij}^2\tau_{X_{s-}}\phi}{h_n}\,
d[X^i,X^j]^c_s}_{=\inpr{I^2_t}{h_n}}\\
&+\sum_{s\leq t}\left[\inpr{\tau_{X_s}
\phi}{h_n}  - \inpr{\tau_{X_{s-}}\phi}{h_n}+\sum_{i=1}^d
\inpr{\partial_i\tau_{X_{s-}}\phi}{h_n}\,
\bigtriangleup\!X^i_s\right],\notag
\end{align}
where $\bigtriangleup X^i_s$ denotes the jump of $X^i_s$. Now varying $n$ in 
the 
countable set $\mathbb{Z}^d_+$, we get a common null set 
$\widetilde \Omega$ such that for all $\omega \in 
\Omega\setminus\widetilde\Omega$, for all $n \in \mathbb{Z}^d_+$ and for all $t 
\geq 0$, we have
\begin{equation*}
\begin{split}
\left\langle\right.(\tau_{X_t}\phi -\tau_{X_0}\phi &+ \sum_{i=1}^d 
\int_0^t
\partial_i\tau_{X_{s-}}\phi\,
dX^i_s\\
&-\frac{1}{2}\sum_{i,j=1}^d \int_0^t \partial_{ij}^2\tau_{X_{s-}}\phi\,
d[X^i,X^j]^c_s
-Y_t),h_n\left.\right\rangle=0.
\end{split}
\end{equation*}
Recall that $\{h_n^q : n\in \mathbb{Z}_+^d\}$ is an orthonormal basis for 
$\Sc_{-q}(\R^d)$, where $h_n^q = (2k+d)^{-q}$ with $k = |n| = n_1 + \cdots + 
n_d$. From the previous relation, we get the required equality in 
$\Sc_{-p-1}(\R^d)$ for semimartingales 
$\{X_t\}$ such that $\{X_{t-}\},\{[X^i,X^j]^c_t\},\,i,j=1,\cdots,d$ are 
bounded.
\item Now suppose at least one of 
$\{X_{t-}\},\{[X^i,X^j]^c_t\},\,i,j=1,\cdots,d$ is not bounded. Then define 
\[\bar\sigma_n:=\inf\{t \geq 0: |[X^i,X^j]_t^c| 
\geq 
n,\,i,j=1,\cdots,d\}\]
and
\[\widetilde\sigma_n:=\inf\{t \geq 0: |X_t| \geq 
n\},\] 
where $|\cdot|$ represents the Euclidean norms in the appropriate space $\R^m$ 
($m=1$ or $d$). Set $\sigma_n = 
\bar\sigma_n\wedge\widetilde\sigma_n$. Then 
$\{\left([X^i,X^j]^c\right)^{\sigma_n}_t\},\,i,j=1,\cdots,d$ are 
bounded.\\
If $|X_0(\omega)| > n$ for some 
$w$, then $\tau_n(\omega)=0$. Such $\omega$ does not contribute to 
$\sum_{i=1}^d \int_0^{t\wedge\sigma_n}
\|\partial_i\tau_{X_{s-}}\xi\|_{p-\frac{1}{2}}^2\,
d\predbracket{M^i}_s$ etc. So we may assume the processes 
$\{X^{\sigma_n}_{t-}\}$ are bounded. Hence a.s. in $\Sc_{-p-1}(\R^d)$ we have 
for all $t \geq 0$
\begin{align*}
\tau_{X_{t\wedge\sigma_n}}\phi =&\tau_{X_0}\phi + \sum_{i=1}^d 
\int_0^{t\wedge\sigma_n}
\partial_i\tau_{X_{s-}}\phi\,
dX^i_s\\
& - \frac{1}{2}\sum_{i,j=1}^d \int_0^{t\wedge\sigma_n} 
\partial_{ij}^2\tau_{X_{s-}}\phi\,
d[X^i,X^j]^c_s
-Y_t.
\end{align*}
Letting $n$ go to infinity we get the result.
\end{enumerate}
\end{proof}

Given a real valued semimartingale $\{X_t\}$, consider the local
time process denoted by $\{L_t(x)\}_{t\in [0,\infty), x \in \R}$. Note that 
this process is jointly measurable in $(x,t,\omega)$ and for each $x \in 
\R$, $\{L_t(x)\}$ is a continuous adapted process. By the occupation 
density formula \cite{MR2020294}*{p. 216, Corollary 
1}, we have for any $\phi \in 
\Sc$, a.s.
\begin{equation}\label{occupation-density}
\int_{-\infty}^{\infty}L_t(x)\phi(x)\, dx = \int_0^t 
\phi(X_{s-})d\bracket{X}^c_s,
\end{equation}
where $\bracket{X}$ stands for $\bracket{X,X}$ and $\bracket{X}^c$ denotes the 
continuous part of $\bracket{X}$ (also see \cite{MR1467435}*{Proposition 4}). 
By 
\cite{MR2020294}*{p. 216, Corollary 
2} a.s.
\[\int_{-\infty}^{\infty}L_t(x)\, dx = \int_0^t 
d\bracket{X}^c_s,
\]
which shows a.s. for all $t$, the map $x \mapsto L_t(x)$ is integrable. We now 
identify the local time 
process in $\Sc'$.
\begin{propn}\label{local-time}
The $\Sc'$ valued process $\{\int_0^t 
\delta_{X_{s-}}d\bracket{X}^c_s\}$ is $\Sc_{-p}$ 
valued for any $p > \frac{1}{4}$ and for each $t$, $\int_0^t 
\delta_{X_{s-}}d\bracket{X}^c_s$ is given by the 
integrable function $x \mapsto 
L_t(x)$.
\end{propn}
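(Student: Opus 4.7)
The plan is to realize the $\Sc'$-valued integral as a Bochner integral in $\Sc_{-p}$ and then pair it with test functions $\phi \in \Sc$ to match the occupation density formula \eqref{occupation-density}.

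First I would show $\delta_0 \in \Sc_{-p}$ for every $p > \tfrac{1}{4}$. Writing the formal Hermite expansion $\delta_0 = \sum_{n \geq 0} h_n(0)\,h_n$, the squared norm is
\[\|\delta_0\|_{-p}^2 = \sum_{n \geq 0} (2n+1)^{-2p} h_n(0)^2.\]
Since $h_{2k+1}(0) = 0$ and the classical pointwise value satisfies $h_{2k}(0)^2 = O(k^{-1/2})$ (from the explicit formula together with Stirling), the summand decays like $k^{-2p - 1/2}$, which is summable precisely when $p > \tfrac{1}{4}$. Writing $\delta_x = \tau_x \delta_0$ and invoking Proposition \ref{tau-x-estmte}(a), I get $\|\delta_x\|_{-p} \leq P_k(|x|)\,\|\delta_0\|_{-p}$ for all $x \in \R$, and the map $x \mapsto \delta_x \in \Sc_{-p}$ is continuous (the continuity of translation in $\Sc_{-p}$ was already used in the proof of Lemma \ref{tau-predictable-processes}).

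Next I would check that the Bochner integral is well-defined. Since $\{X_{s-}\}$ is predictable and $x \mapsto \delta_x$ is continuous into $\Sc_{-p}$, $\{\delta_{X_{s-}}\}$ is an $\Sc_{-p}$-valued predictable process. Because $X$ has rcll paths, $\sup_{s\le t}|X_{s-}(\omega)| < \infty$ a.s., so $\sup_{s\le t}\|\delta_{X_{s-}}\|_{-p} < \infty$ a.s.\ by the polynomial bound above. Combined with $\bracket{X}^c_t < \infty$, the pathwise condition \eqref{intg-cond-fv} holds and the Bochner integral $\int_0^t \delta_{X_{s-}}\,d\bracket{X}^c_s$ exists as an $\Sc_{-p}$-valued random variable.

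Finally, I would identify this Bochner integral with the function $L_t$. For any $\phi \in \Sc$, applying \eqref{A-st-intg-rlf} gives, a.s.,
\[\inpr{\int_0^t \delta_{X_{s-}}\, d\bracket{X}^c_s}{\phi} = \int_0^t \inpr{\delta_{X_{s-}}}{\phi}\, d\bracket{X}^c_s = \int_0^t \phi(X_{s-})\, d\bracket{X}^c_s,\]
which by the occupation density formula \eqref{occupation-density} equals $\int_{-\infty}^{\infty} L_t(x)\phi(x)\,dx$. Varying $\phi$ over $\Sc$ identifies $\int_0^t \delta_{X_{s-}}\,d\bracket{X}^c_s$, as a tempered distribution, with the integrable function $x \mapsto L_t(x)$.

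The only non-routine step is the first one, namely establishing $\delta_0 \in \Sc_{-p}$ via the pointwise behaviour of Hermite functions at $0$; once $\delta_x = \tau_x\delta_0$ is known to live in $\Sc_{-p}$ with polynomially controlled norm, both the Bochner integrability and the identification with $L_t$ are immediate from results already developed in Sections 2 and 3.
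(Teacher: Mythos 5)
Your proposal is correct and follows essentially the same route as the paper: realize $\int_0^t \delta_{X_{s-}}\,d\bracket{X}^c_s$ as an $\Sc_{-p}$-valued Bochner integral, pair with test functions via \eqref{A-st-intg-rlf}, and invoke the occupation density formula \eqref{occupation-density}. Two small differences are worth noting. First, where you verify $\delta_0\in\Sc_{-p}$ by hand from $h_{2k}(0)^2=O(k^{-1/2})$ and then control $\|\delta_x\|_{-p}$ only polynomially in $|x|$ through Proposition \ref{tau-x-estmte}(a), the paper instead cites \cite{MR2373102}*{Theorem 4.1}, which gives the uniform bound $\sup_{x}\|\delta_x\|_{-p}<\infty$; this places $\{\delta_{X_{t-}}\}$ directly in the norm-bounded class for which the Section 3 integral and identity \eqref{A-st-intg-rlf} were formulated, whereas your pathwise bound $\sup_{s\le t}\|\delta_{X_{s-}}\|_{-p}<\infty$ is adequate for the pathwise Bochner integral but strictly speaking sits slightly outside the stated hypotheses, so you should either quote the uniform bound or remark that the Bochner-integral duality holds for any pathwise integrable integrand. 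Second, in the identification step ``varying $\phi$ over $\Sc$'' hides a null-set issue: each $\phi$ produces its own exceptional set, so you should (as the paper does) run the identity over the countable total family $\{h_n\}$ to obtain a single null set, and then conclude via the orthonormal basis $\{(2n+1)^{p}h_n\}$ of $\Sc_{-p}$ (or by density of finite linear combinations). With these two routine adjustments your argument coincides with the paper's proof.
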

\begin{proof}
Note that for any fixed $x \in \R$, the
distribution $\delta_x$ is in $\Sc_{-p}$ for any $p > \frac{1}{4}$ and 
furthermore for such a $p$ we have $\sup_{x \in \R^d} \|\delta_x\|_{-p} < 
\infty$ (see \cite{MR2373102}*{Theorem 4.1}). Also $\tau_x \delta_0 
= \delta_x, \forall x \in \R^d$. Hence $\{\delta_{X_{t-}}\}$ is an 
$\Sc_{-p}$ valued norm-bounded predictable process (see 
Lemma \ref{tau-predictable-processes}). Then we can define the $\Sc_{-p}$ 
valued 
process $\{\int_0^t 
\delta_{X_{s-}}d\bracket{X}^c_s\}$ for any $p > 
\frac{1}{4}$. But for any integer $n \geq 0$, by \eqref{occupation-density} 
a.s. for all $t 
\geq 0$
\begin{align*}
\inpr{\int_0^t 
\delta_{X_{s-}}\,d\bracket{X}^c_s}{h_n}&= \int_0^t 
\inpr{\delta_{X_{s-}}}{h_n}\,d\bracket{X}^c_s\\
&= \int_0^t 
h_n(X_{s-})\,d\bracket{X}^c_s = \int_{-\infty}^{\infty}L_t(x)h_n(x)\, dx
\end{align*}
Then there exists a $P$ null set $\widetilde\Omega$ such that on 
$\Omega\setminus\widetilde\Omega$ for all integers $n \geq 0$ and all $t \geq 0$
\[\inpr{\int_0^t 
\delta_{X_{s-}}\,d\bracket{X}^c_s}{h_n} = \int_{-\infty}^{\infty}L_t(x)h_n(x)\, 
dx.\]
Since $\{(2n+1)^p h_n : n=0,1,\cdots\}$ is an orthonormal basis for 
$\Sc_{-p}$, for each $t$, the $\Sc'$ valued 
random variable $\int_0^t 
\delta_{X_{s-}}d\bracket{X}^c_s$ is given 
by the function $x \mapsto L_t(x)$.
\end{proof}

We now apply Theorem \ref{Ito-formula} to a L\'{e}vy process to obtain the 
existence of solutions of certain classes of stochastic differential equations 
in the Hermite-Sobolev spaces. This is similar in spirit to the same obtained 
in \cite{MR3063763}*{Theorem 3.4 and Lemma 3.6} for continuous processes.\\
Let $p \in \R$. Let $\phi \in \mathcal{S}_{p}$ 
and $\sigma, b \in \Sc_{-p}$. Let $F, G:\mathcal{S}_{p}\times \R \to 
\R$ and let $\bar F, \bar G: \R\times \R \to \R$ be given by $\bar F(x, \tilde 
x) := F(\tau_x\phi, \tilde x), \quad \bar G(x, \tilde 
x) := G(\tau_x\phi, \tilde x)$. Let $\{B_t\}$ be the standard $(\F_t)$ Brownian 
motion and let $N$ be a Poisson process driven by a L\'evy measure $\nu$. Let 
$\widetilde N$ denote the compensated measure. Assume that $B$ and $N$ 
are 
independent. Let the one-dimensional process $\{X_t\}$ 
satisfy the following 
equation: a.s. $t \geq 0$
\begin{equation}\label{fd-sde}
\begin{split}
X_t &=  \int_0^t \bar b(X_{s-})\, ds + \int_0^t \bar\sigma(X_{s-})\, dB_s\\
&+\int_0^t \int_{(0 < |x| < 1)}  \bar F(X_{s-},x)\, \widetilde
N(dsdx)\\
&+ \int_0^t \int_{(|x| \geq  1)} \bar G(X_{s-},x) \,
N(dsdx),
\end{split}
\end{equation}
where
\begin{enumerate}
\item $\bar\sigma(x) := \inpr{\sigma}{\tau_x\phi}, \bar b(x) := 
\inpr{b}{\tau_x\phi}$ are Lipschitz continuous functions,
\item the coefficients $\bar F, \bar G$ satisfy conditions of 
\cite{MR2512800}*{Chapter 6, Section 2} with $c = 1$. This parameter $c$ 
separates the small and large jumps. We assume the integrability 
condition: a.s.
\[\int_0^t\int_{(0 < |x| < 1)}|\bar 
F(X_{s-},x)|^2\, \nu(dx)ds < \infty,\, \forall t \geq 0.\]
\end{enumerate}
As an application of Theorem \ref{Ito-formula} we get the next result.
\begin{thm}\label{Levy}
The $\Sc_p$ valued process $Y$ defined by $Y_t := \tau_{X_t}\phi$ 
solves the following stochastic differential equation with equality in 
$\Sc_{p-1}$:
\begin{equation}\label{Levy-SPDE}
\begin{split}
Y_t(\phi )
&=\phi + \int_0^t 
A(Y_{s-}(\phi))\, dB_s + \int_0^t L(Y_{s-}(\phi))\, ds\\
&+\int_0^t \int_{(0 < |x| < 1)} \left( \tau_{F(Y_{s-}(\phi),x)} 
-Id + 
F(Y_{s-}(\phi),x)\, \partial\right) Y_{s-}(\phi)\,
\nu(dx)\,ds\\
&+\int_0^t \int_{(0 < |x| < 1)} \left(\tau_{F(Y_{s-}(\phi),x)} 
-Id\right) \,
Y_{s-}(\phi)\,\widetilde N(dsdx)\\
&+ \int_0^t \int_{(|x| \geq  1)} \left(\tau_{G(Y_{s-}(\phi),x)} 
-Id\right) \,Y_{s-}(\phi)\,
N(dsdx),
\end{split}
\end{equation}
where the operators $A, L$ on $\Sc_{p}$ are as follows:
\begin{equation*}
A \phi := -  \inpr{\sigma}{\phi}\, \partial \phi,
\end{equation*}
and
\begin{equation*}
L\phi := \frac{1}{2}  
\inpr{\sigma}{\phi}^2\, \partial^2 \phi - 
\inpr{b}{\phi}\, \partial \phi.
\end{equation*}

\end{thm}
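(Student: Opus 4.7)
The strategy is to apply the rcll It\={o} formula (Theorem \ref{Ito-formula}) to the one-dimensional semimartingale $X$ defined by \eqref{fd-sde}, and then algebraically regroup the resulting terms. First I would read off the semimartingale decomposition of $X$: the local martingale part consists of the Brownian integral $\int_0^t\bar\sigma(X_{s-})\,dB_s$ and the compensated Poisson integral $\int_0^t\int_{0<|x|<1}\bar F(X_{s-},x)\,\widetilde N(ds\,dx)$, while the finite-variation part consists of the drift $\int_0^t\bar b(X_{s-})\,ds$ and the large-jump integral $\int_0^t\int_{|x|\geq 1}\bar G(X_{s-},x)\,N(ds\,dx)$. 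Consequently $[X]^c_t=\int_0^t\bar\sigma(X_{s-})^2\,ds$, and every jump of $X$ is a jump of $N$, with $\Delta X_s=\bar F(X_{s-},x)$ in the small-mark regime and $\Delta X_s=\bar G(X_{s-},x)$ in the large-mark regime.

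Next, apply Theorem \ref{Ito-formula} to $Y_t=\tau_{X_t}\phi$. The stochastic integral $-\int_0^t\partial\tau_{X_{s-}}\phi\,dX_s$ splits along the four pieces of $dX_s$. Using the identifications $\bar\sigma(X_{s-})=\inpr{\sigma}{Y_{s-}}$ and $\bar b(X_{s-})=\inpr{b}{Y_{s-}}$, the Brownian piece is exactly $\int_0^t A(Y_{s-})\,dB_s$, and the drift piece together with the continuous $\frac{1}{2}\partial^2$-correction combine into $\int_0^t L(Y_{s-})\,ds$. The identity $\tau_{X_s}\phi=\tau_{\Delta X_s}\tau_{X_{s-}}\phi=\tau_{\Delta X_s}Y_{s-}$ lets me rewrite $\tau_{X_s}\phi-\tau_{X_{s-}}\phi=(\tau_{\Delta X_s}-\mathrm{Id})Y_{s-}$ and recast the jump sum in \eqref{Ito-formula-statement} as a sum of two $N$-integrals, one over $\{0<|x|<1\}$ with integrand $(\tau_{\bar F(X_{s-},x)}-\mathrm{Id})Y_{s-}+\bar F(X_{s-},x)\,\partial Y_{s-}$ and one over $\{|x|\geq 1\}$ with the analogous $\bar G$-integrand.

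The heart of the proof is the interplay among the small-jump terms. The small-jump contribution to the stochastic integral is $-\int_0^t\int_{0<|x|<1}\bar F(X_{s-},x)\,\partial Y_{s-}\,\widetilde N(ds\,dx)$. I would rewrite this as an $N$-integral minus its $\nu(dx)\,ds$ compensator. Adding it to the small-jump part of the jump sum, the $\bar F\,\partial Y_{s-}$ contributions against $N$ cancel outright; splitting the remaining $(\tau_{\bar F(X_{s-},x)}-\mathrm{Id})Y_{s-}\,N(ds\,dx)$ into its $\widetilde N$-part plus its compensator produces exactly the two small-jump lines of \eqref{Levy-SPDE}. For large jumps, the $\bar G\,\partial Y_{s-}\,N$ contributions from the jump sum and from the stochastic integral cancel, leaving the single uncompensated $N$-integral of $(\tau_{\bar G(X_{s-},x)}-\mathrm{Id})Y_{s-}$.

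The main obstacle is verifying that each integral is well-defined and lives in $\Sc_{p-1}$. The crucial ingredient is a Taylor-type estimate $\|(\tau_h-\mathrm{Id}-h\,\partial)\psi\|_{p-1}\leq C|h|^2\|\psi\|_p$, which follows from the same $C^2$-argument as in Lemma \ref{translates-by-jumpsofX} applied inside $\Sc_p$ instead of $\Sc_{-p}$. Combined with the standing hypothesis $\int_0^t\int_{0<|x|<1}|\bar F(X_{s-},x)|^2\,\nu(dx)\,ds<\infty$, this makes the $\nu(dx)\,ds$-integral of the small-jump correction term Bochner-integrable in $\Sc_{p-1}$ and guarantees that the compensated $\widetilde N$-integral is a well-defined $\Sc_{p-1}$-valued local $\Ltwo$-martingale; Proposition \ref{tau-x-estmte} controls the growth of $\tau_{X_{s-}}\phi$. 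The standard localisation by stopping times, as in the final step of the proof of Theorem \ref{Ito-formula}, handles the case where $\{X_{t-}\}$ is unbounded.
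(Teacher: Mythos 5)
Your overall strategy coincides with the paper's: apply Theorem \ref{Ito-formula} to the solution of \eqref{fd-sde}, read off $[X]^c_t=\int_0^t\bar\sigma(X_{s-})^2\,ds$ and the jump identity \eqref{jump-form}, identify the $A$- and $L$-terms, and regroup the jump sum into Poisson integrals. The gap is in how you perform the regrouping. You propose to rewrite the compensated small-jump part of the stochastic integral, $-\int_0^t\int_{(0<|x|<1)}\bar F(X_{s-},x)\,\partial\tau_{X_{s-}}\phi\,\widetilde N(ds\,dx)$, as ``an $N$-integral minus its $\nu(dx)\,ds$ compensator'', and later to split $\int_0^t\int_{(0<|x|<1)}\bigl(\tau_{\bar F(X_{s-},x)}-Id\bigr)\tau_{X_{s-}}\phi\,N(ds\,dx)$ into its $\widetilde N$-part plus its compensator. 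Both splittings involve integrands that are only of order $|\bar F(X_{s-},x)|$ in norm, so they require $\int_0^t\int_{(0<|x|<1)}|\bar F(X_{s-},x)|\,\nu(dx)\,ds<\infty$. That first-moment condition is not assumed (only the square-integrability condition is), and it fails in general: take $\bar F(x,\tilde x)=\tilde x$ with $\nu$ an $\alpha$-stable L\'evy measure, $1<\alpha<2$. So the intermediate objects through which your cancellation passes are, in general, not well-defined, even though the final four lines of \eqref{Levy-SPDE} are.

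The paper's proof is arranged precisely to avoid this. The jump sum is first rewritten, pathwise, as the $N$-integral over $(0<|x|<1)$ of the \emph{full} second-order remainder $\bigl(\tau_{\bar F(X_{s-},x)}-Id+\bar F(X_{s-},x)\,\partial\bigr)\tau_{X_{s-}}\phi$, plus the two large-jump $N$-integrals. By Lemma \ref{translates-by-jumpsofX} this integrand is $O(|\bar F|^2)$ in norm, so its $\nu(dx)\,ds$-compensator converges by the standing hypothesis; moreover, since \eqref{jump-form} forces $|\bar F(X_{s-},\bigtriangleup X_s)|\indicator{(0<|\bigtriangleup X_s|<1)}\leq 1$, one gets $|\bar F|^4\indicator{}\leq|\bar F|^2\indicator{}$, which yields the square-integrability needed to split this single $N$-integral into a $\widetilde N$-integral plus its compensator. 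Only afterwards is the $\bar F\,\partial\tau_{X_{s-}}\phi$ term cancelled, inside well-defined $\widetilde N$-integrals, using linearity together with the separate $L^2$ estimate for $(\tau_{\bar F}-Id)\tau_{X_{s-}}\phi$. Your argument needs this reordering (or an additional truncation of small jumps followed by a limiting argument, which you do not supply). A minor further point: your Taylor estimate should involve $\tau_h-Id+h\,\partial$ rather than $\tau_h-Id-h\,\partial$, since $(\tau_h\psi)(y)=\psi(y-h)$; with the minus sign the expression is only first order in $h$.
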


\begin{proof}

Observe that
\begin{equation}\label{jump-form}
\bigtriangleup X_t = \bar F(X_{t-},\bigtriangleup 
X_t)\indicator{(0 < |\bigtriangleup X_t| < 1)} + \bar G(X_{t-},\bigtriangleup 
X_t)\indicator{(|\bigtriangleup X_t| \geq 1)}.
\end{equation}
From \eqref{jump-form} we make two observations. Firstly, $|\bar 
F(X_{t-},\bigtriangleup 
X_t)| \indicator{(0 < |\bigtriangleup X_t| < 1)} \leq 1$. In particular, this 
implies
\[|\bar F(X_{t-},\bigtriangleup 
X_t)|^4 \indicator{(0 < |\bigtriangleup X_t| < 1)} \leq |\bar 
F(X_{t-},\bigtriangleup 
X_t)|^2 \indicator{(0 < |\bigtriangleup X_t| < 1)}.\]
Secondly, we have the following simplification.
\begin{align*}
&\tau_{X_s}\phi - \tau_{X_{s-}}\phi
+\bigtriangleup 
X_s\,\partial\tau_{X_{s-}}\phi\\
&= \left( \tau_{\bigtriangleup X_s} - 
Id\right)\tau_{X_{s-}}\phi
+\bigtriangleup 
X_s\,\partial\tau_{X_{s-}}\phi\\
&=\indicator{(0 < |\bigtriangleup X_s| < 1)}\left( \tau_{\bar 
F(X_{s-},\bigtriangleup 
X_s)} - 
Id + \bar F(X_{s-},\bigtriangleup 
X_s)\, \partial\right)\tau_{X_{s-}}\phi\\
&+ \indicator{(|\bigtriangleup X_s| \geq 1)}\left( \tau_{\bar 
G(X_{s-},\bigtriangleup 
X_s)} - 
Id\right)\tau_{X_{s-}}\phi + \indicator{(|\bigtriangleup X_s| \geq 1)}\,\bar 
G(X_{s-},\bigtriangleup 
X_s)\, \partial\tau_{X_{s-}}\phi.
\end{align*}
Using equation \eqref{tau-jumpsofX-bnd}, we have
\begin{align*}
&\indicator{(0 < |\bigtriangleup X_s| < 1)}\left\|\left( 
\tau_{\bar 
F(X_{s-},\bigtriangleup X_s)} - 
Id + \bar F(X_{s-},\bigtriangleup X_s)\, \partial\right) \tau_{X_{s-}}\phi 
\right\|_{-p-1}\\
&\leq C(s).\indicator{(0 < |\bigtriangleup X_s| < 1)}\,|\bar 
F(X_{s-},\bigtriangleup X_s)|^2,
\end{align*}
where $t \mapsto C(t)$ is a positive non-decreasing function. Then
\begin{align*}
&\int_0^t\int_{(0 < |x| < 1)}\left\|\left( 
\tau_{\bar 
F(X_{s-},x)} - 
Id + \bar F(X_{s-},x)\, \partial\right) \tau_{X_{s-}}\phi 
\right\|_{-p-1}^2 \, \nu(dx)ds\\
&\leq \int_0^t C(s)^2\int_{(0 < |x| < 1)}|\bar 
F(X_{s-},x)|^4\, \nu(dx)ds\\
&\leq C(t)^2 \int_0^t\int_{(0 < |x| < 1)}|\bar 
F(X_{s-},x)|^2\, \nu(dx)ds < \infty
\end{align*}
Similarly
\begin{align*}
&\int_0^t\int_{(0 < |x| < 1)}\left\|\left( 
\tau_{\bar 
F(X_{s-},x)} - 
Id \right) \tau_{X_{s-}}\phi 
\right\|_{-p-\frac{1}{2}}^2 \nu(dx)ds\\ 
&\leq \tilde C(t)^2 \int_0^t \int_{(0 < |x| < 1)}|\bar 
F(X_{s-},x)|^2 \nu(dx)ds < \infty,
\end{align*}
where $t \mapsto \tilde C(t)$ is some non-decreasing function. Hence
\begin{align*}
&\sum_{s \leq t} \left[ \tau_{X_s}\phi - \tau_{X_{s-}}\phi
+\bigtriangleup 
X_s\,\partial\tau_{X_{s-}}\phi \right]\\
&= \int_0^t \int_{(0 < |x| < 1)} \left( 
\tau_{\bar 
F(X_{s-},x)} - 
Id + \bar 
F(X_{s-},x)\, \partial\right)\tau_{X_{s-}}\phi\, N(dsdx)\\
&+ \int_0^t \int_{(|x| \geq  1)} \left( 
\tau_{\bar 
G(X_{s-},x)} - 
Id\right)\tau_{X_{s-}}\phi\,
N(dsdx)\\
&+ \int_0^t \int_{(|x| \geq  1)} \bar G(X_{s-},x)\, \partial\tau_{X_{s-}}\phi\,
N(dsdx)\\
&= \int_0^t \int_{(0 < |x| < 1)} \left( 
\tau_{\bar 
F(X_{s-},x)} - 
Id + \bar 
F(X_{s-},x)\, \partial\right)\tau_{X_{s-}}\phi\, \widetilde N(dsdx)\\
&+ \int_0^t \int_{(0 < |x| < 1)} \left( 
\tau_{\bar 
F(X_{s-},x)} - 
Id + \bar 
F(X_{s-},x)\, \partial\right)\tau_{X_{s-}}\phi\, \nu(dx)ds\\
&+ \int_0^t \int_{(|x| \geq  1)} \left( 
\tau_{\bar 
G(X_{s-},x)} - 
Id\right)\tau_{X_{s-}}\phi\,
N(dsdx)\\
&+ \int_0^t \int_{(|x| \geq  1)} \bar G(X_{s-},x)\, \partial\tau_{X_{s-}}\phi\,
N(dsdx).
\end{align*}
Now by the It\={o} formula (Theorem \ref{Ito-formula})
\begin{align*}
\tau_{X_t}\phi &= \tau_{X_0}\phi + \int_0^t 
A(\tau_{X_{s-}}\phi)\, dB_s + \int_0^t L(\tau_{X_{s-}}\phi)\, ds\\
&-\int_0^t \int_{(0 < |x| < 1)} \bar F(X_{s-},x)\, \partial\tau_{X_{s-}}\phi\, 
\widetilde
N(dsdx)\\
&- \int_0^t \int_{(|x| \geq  1)} \bar G(X_{s-},x)\, \partial\tau_{X_{s-}}\phi\,
N(dsdx)\\
&+ \sum_{s \leq t} \left[ \tau_{X_s}\phi - \tau_{X_{s-}}\phi
+\bigtriangleup 
X_s\,\partial\tau_{X_{s-}}\phi \right]\\
&=\phi + \int_0^t 
A(\tau_{X_{s-}}\phi)\, dB_s + \int_0^t L(\tau_{X_{s-}}\phi)\, ds\\
&+\int_0^t \int_{(0 < |x| < 1)} \left( 
\tau_{\bar 
F(X_{s-},x)} - 
Id + \bar F(X_{s-},x)\, \partial\right) \tau_{X_{s-}}\phi\,
\nu(dx)\,ds\\
&+\int_0^t \int_{(0 < |x| < 1)} \left( 
\tau_{\bar F(X_{s-},x)} - 
Id\right)\tau_{X_{s-}}\phi\,\widetilde N(dsdx)\\
&+ \int_0^t \int_{(|x| \geq  1)} \left( 
\tau_{\bar 
G(X_{s-},x)} - 
Id\right)\tau_{X_{s-}}\phi\,
N(dsdx)
\end{align*}
Hence $Y_t(\phi):= \tau_{X_t}\phi$ solves the equation \eqref{Levy-SPDE}.
\end{proof}

\noindent\textbf{Acknowledgement:} The author would like to thank Professor B.
Rajeev, Indian Statistical Institute, Bangalore for valuable suggestions during 
the work and pointing out the way to Theorem \ref{Ito-formula}.

\begin{bibdiv}
\begin{biblist}

\bib{MR2512800}{book}{
      author={Applebaum, David},
       title={L\'evy processes and stochastic calculus},
     edition={Second},
      series={Cambridge Studies in Advanced Mathematics},
   publisher={Cambridge University Press, Cambridge},
        date={2009},
      volume={116},
        ISBN={978-0-521-73865-1},
         url={http://dx.doi.org/10.1017/CBO9780511809781},
      review={\MR{2512800 (2010m:60002)}},
}

\bib{MR1207136}{book}{
      author={Da~Prato, Giuseppe},
      author={Zabczyk, Jerzy},
       title={Stochastic equations in infinite dimensions},
      series={Encyclopedia of Mathematics and its Applications},
   publisher={Cambridge University Press, Cambridge},
        date={1992},
      volume={44},
        ISBN={0-521-38529-6},
         url={http://dx.doi.org/10.1017/CBO9780511666223},
      review={\MR{1207136 (95g:60073)}},
}

\bib{MR562914}{book}{
      author={Hida, Takeyuki},
       title={{Brownian motion}},
      series={{Applications of Mathematics}},
   publisher={Springer-Verlag},
     address={New York},
        date={1980},
      volume={11},
        ISBN={0-387-90439-5},
        note={Translated from the Japanese by the author and T. P. Speed},
      review={\MR{562914 (81a:60089)}},
}

\bib{MR771478}{book}{
      author={It{\=o}, Kiyosi},
       title={{Foundations of stochastic differential equations in
  infinite-dimensional spaces}},
      series={{CBMS-NSF Regional Conference Series in Applied Mathematics}},
   publisher={Society for Industrial and Applied Mathematics (SIAM)},
     address={Philadelphia, PA},
        date={1984},
      volume={47},
        ISBN={0-89871-193-2},
      review={\MR{771478 (87a:60068)}},
}

\bib{MR1943877}{book}{
      author={Jacod, Jean},
      author={Shiryaev, Albert~N.},
       title={Limit theorems for stochastic processes},
     edition={Second},
      series={Grundlehren der Mathematischen Wissenschaften [Fundamental
  Principles of Mathematical Sciences]},
   publisher={Springer-Verlag},
     address={Berlin},
        date={2003},
      volume={288},
        ISBN={3-540-43932-3},
      review={\MR{1943877 (2003j:60001)}},
}

\bib{MR1464694}{book}{
      author={Kallenberg, Olav},
       title={{Foundations of modern probability}},
      series={{Probability and its Applications (New York)}},
   publisher={Springer-Verlag},
     address={New York},
        date={1997},
        ISBN={0-387-94957-7},
      review={\MR{1464694 (99e:60001)}},
}

\bib{MR1465436}{book}{
      author={Kallianpur, Gopinath},
      author={Xiong, Jie},
       title={{Stochastic differential equations in infinite-dimensional
  spaces}},
      series={{Institute of Mathematical Statistics Lecture Notes---Monograph
  Series, 26}},
   publisher={Institute of Mathematical Statistics},
     address={Hayward, CA},
        date={1995},
        ISBN={0-940600-38-2},
        note={Expanded version of the lectures delivered as part of the 1993
  Barrett Lectures at the University of Tennessee, Knoxville, TN, March 25--27,
  1993, With a foreword by Balram S. Rajput and Jan Rosinski},
      review={\MR{1465436 (98h:60001)}},
}

\bib{MR0264754}{article}{
      author={Kunita, Hiroshi},
       title={Stochastic integrals based on martingales taking values in
  {H}ilbert space},
        date={1970},
        ISSN={0027-7630},
     journal={Nagoya Math. J.},
      volume={38},
       pages={41\ndash 52},
      review={\MR{0264754 (41 \#9345)}},
}

\bib{MR688144}{book}{
      author={M{\'e}tivier, Michel},
       title={Semimartingales},
      series={de Gruyter Studies in Mathematics},
   publisher={Walter de Gruyter \& Co.},
     address={Berlin},
        date={1982},
      volume={2},
        ISBN={3-11-008674-3},
        note={A course on stochastic processes},
      review={\MR{688144 (84i:60002)}},
}

\bib{MR578177}{book}{
      author={M{\'e}tivier, Michel},
      author={Pellaumail, Jean},
       title={Stochastic integration},
   publisher={Academic Press [Harcourt Brace Jovanovich Publishers]},
     address={New York},
        date={1980},
        ISBN={0-12-491450-0},
        note={Probability and Mathematical Statistics},
      review={\MR{578177 (82b:60060)}},
}

\bib{MR613311}{article}{
      author={Mitoma, Itaru},
       title={Martingales of random distributions},
        date={1981},
        ISSN={0373-6385},
     journal={Mem. Fac. Sci. Kyushu Univ. Ser. A},
      volume={35},
      number={1},
       pages={185\ndash 197},
         url={http://dx.doi.org/10.2206/kyushumfs.35.185},
      review={\MR{613311 (82e:60079)}},
}

\bib{MR2020294}{book}{
      author={Protter, Philip~E.},
       title={Stochastic integration and differential equations},
     edition={Second},
      series={Applications of Mathematics (New York)},
   publisher={Springer-Verlag},
     address={Berlin},
        date={2004},
      volume={21},
        ISBN={3-540-00313-4},
        note={Stochastic Modelling and Applied Probability},
      review={\MR{2020294 (2005k:60008)}},
}

\bib{MR1467435}{article}{
      author={Rajeev, B.},
       title={From {T}anaka's formula to {I}t\^o's formula: the fundamental
  theorem of stochastic calculus},
        date={1997},
        ISSN={0253-4142},
     journal={Proc. Indian Acad. Sci. Math. Sci.},
      volume={107},
      number={3},
       pages={319\ndash 327},
         url={http://dx.doi.org/10.1007/BF02867261},
      review={\MR{1467435 (98d:60104)}},
}

\bib{MR1837298}{incollection}{
      author={Rajeev, B.},
       title={{From {T}anaka's formula to {I}to's formula: distributions,
  tensor products and local times}},
        date={2001},
   booktitle={{S{\'e}minaire de {P}robabilit{\'e}s, {XXXV}}},
      series={{Lecture Notes in Math.}},
      volume={1755},
   publisher={Springer},
     address={Berlin},
       pages={371\ndash 389},
         url={http://dx.doi.org/10.1007/978-3-540-44671-2_25},
      review={\MR{1837298 (2002j:60093)}},
}

\bib{MR3063763}{article}{
      author={Rajeev, B.},
       title={Translation invariant diffusion in the space of tempered
  distributions},
        date={2013},
        ISSN={0019-5588},
     journal={Indian J. Pure Appl. Math.},
      volume={44},
      number={2},
       pages={231\ndash 258},
         url={http://dx.doi.org/10.1007/s13226-013-0012-0},
      review={\MR{3063763}},
}

\bib{MR1999259}{article}{
      author={Rajeev, B.},
      author={Thangavelu, S.},
       title={{Probabilistic representations of solutions to the heat
  equation}},
        date={2003},
        ISSN={0253-4142},
     journal={Proc. Indian Acad. Sci. Math. Sci.},
      volume={113},
      number={3},
       pages={321\ndash 332},
         url={http://dx.doi.org/10.1007/BF02829609},
      review={\MR{1999259 (2004g:60100)}},
}

\bib{MR2373102}{article}{
      author={Rajeev, B.},
      author={Thangavelu, S.},
       title={{Probabilistic representations of solutions of the forward
  equations}},
        date={2008},
        ISSN={0926-2601},
     journal={Potential Anal.},
      volume={28},
      number={2},
       pages={139\ndash 162},
         url={http://dx.doi.org/10.1007/s11118-007-9074-0},
      review={\MR{2373102 (2009e:60139)}},
}

\bib{MR2296978}{book}{
      author={Tr{\`e}ves, Fran\c{c}ois},
       title={{Topological vector spaces, distributions and kernels}},
   publisher={Dover Publications Inc.},
     address={Mineola, NY},
        date={2006},
        ISBN={0-486-45352-9},
        note={Unabridged republication of the 1967 original},
      review={\MR{2296978 (2007k:46002)}},
}

\bib{MR664333}{article}{
      author={{\"U}st{\"u}nel, A.~S.},
       title={{A generalization of {I}t{\^o}'s formula}},
        date={1982},
        ISSN={0022-1236},
     journal={J. Funct. Anal.},
      volume={47},
      number={2},
       pages={143\ndash 152},
         url={http://dx.doi.org/10.1016/0022-1236(82)90102-1},
      review={\MR{664333 (83k:60056)}},
}

\end{biblist}
\end{bibdiv}

\end{document}